\newtheorem{theoreme}{Theorem}[section]
\newtheorem{proposition}{Proposition}[section]
 \newtheorem{lemme}{Lemma}[section]
 \newtheorem{rem}{Remark}[section]
\renewcommand\Re{\mathrm{Re}\,} \renewcommand\Im{\mathrm{Im}\,}
\newcommand\R{{\mathbb R}} \newcommand\N{{\mathbb N}}
\newcommand\Z{{\mathbb Z}} 
\newcommand\C{{\mathbb C}} 
\newcommand\T{{\mathbb T}}
\renewcommand{\Im}{  \text{Im}   }
\renewcommand{\Re}{  \text{Re}   }
 \def\cdotv{\raise 2pt\hbox{,}}
\renewcommand\Re{\mathrm{Re}\,} \renewcommand\Im{\mathrm{Im}\,}
\def\@tvsp{\mathchoice{{}\mkern-4.5mu}{{}\mkern-4.5mu}{{}\mkern-2.5mu}{}}
\def\ltrivert{\left|\@tvsp\left|\@tvsp\left|}
\def\rtrivert{\right|\@tvsp\right|\@tvsp\right|}
 \def\cdotv{\raise 2pt\hbox{,}}
\begin{document}
\title[
Transport  of gaussian measures by the NLS flow 
 ]
{
Transport  of gaussian measures by the flow of the nonlinear  Schr\"odinger equation
}
  \author{Fabrice Planchon}
  \address{    Universit\'e C\^ote d'Azur, CNRS, LJAD, France}
  \email{fabrice.planchon@univ-cotedazur.fr} 
\author{Nikolay Tzvetkov}
\address{
Universit\'e de Cergy-Pontoise,  Cergy-Pontoise, F-95000,UMR 8088 du CNRS
}
\email{nikolay.tzvetkov@u-cergy.fr}
\author{Nicola Visciglia}
\address{Dipartimento di Matematica, Universit\`a di Pisa, Italy}
\email{nicola.visciglia@unipi.it}
\thanks{ The first author was partially supported by ERC grant ANADEL no 757996, 
the second author by ANR grant ODA (ANR-18-CE40-0020-01), and the third 
author by PRA 2016 Problemi di Evoluzione: Studio Qualitativo e Comportamento Asintotico} 
\date{\today}
 \maketitle
 \par \noindent

\begin{abstract}
We prove a new smoothing type property for solutions of the 1d quintic Schr\"odinger equation. 
As a consequence, we prove that a family of natural gaussian measures are quasi-invariant under the flow of this equation. In the defocusing case, we prove global in time quasi-invariance while in the focusing case we only get local in time quasi-invariance because of a blow-up obstruction.  
Our results extend as well to generic odd power nonlinearities. 
\end{abstract}
\section{Introduction}
\subsection{}
Our goal here is to develop further techniques we introduced in our previous work \cite{PTV} in the context of the 1d quintic defocusing NLS.
This will allow to prove quasi-invariance for a  family of natural gaussian measures under the flow of the 1d quintic defocusing NLS on the one dimensional torus. 
This is a significant generalization of the recent works \cite{OST,OT1,OT2,OTT,Tz} since the NLS case was out of reach of the techniques used there. 
Moreover, in the focusing case we get local in time quasi-invariance, thus answering  a question first raised by Bourgain in \cite[page~28]{Bo_proceeding}.    
The quintic NLS exhibits the simplest power like nonlinearity giving a non integrable equation, but our results may easily be extended to all odd power nonlinearities. We elected to focus on the quintic case merely for the sake of clarity in the arguments.
\subsection{A statistical view point on the linear Schr\"odinger equation on the torus}
Consider 
\begin{equation}\label{1}
(i\partial_t+\partial_x^2)u=0,\,\, u(0,x)=u_0(x), \quad t\in \R,\,\, x\in\T. 
\end{equation}
The solution of \eqref{1} is given by the Fourier series 
\begin{equation}\label{fs}
u(t,x)=\sum_{n\in\Z} e^{inx}\, e^{-itn^2}\, \widehat{u}_0(n),
\end{equation}
where $\widehat{u}_0(n)$ denote the Fourier coefficients of $u_0$. It is well-known that such a Fourier series can have quite a complicated behaviour, in particular with respect to rational and irrational times (see e.g. \cite{KR}). It turns out that the situation is much simpler if one adopts a statistical view point on \eqref{fs}. 
More precisely, if we suppose that $ \widehat{u}_0(n)$ are distributed according to independent, centered complex gaussian variables then, using the invariance of complex gaussians under rotations, we observe that \eqref{fs} is identically distributed for each time $t$;  under the considered statistics the quantum particle does not know the time (here we use the terminology of \cite{KR}).
\\

Let us now consider a natural family of gaussian measures associated with \eqref{1}. For $s\in\R$, we denote by $\mu_s$ the gaussian measure induced by the map
\begin{equation}\label{omeg}
\omega\longmapsto \sum_{n\in\Z}\frac{g_n(\omega)}{(1+n^2)^{s/2}}\, e^{inx} \,.
\end{equation}
In \eqref{omeg}, $(g_n)_{n\in\Z}$ is a family of independent, standard complex gaussians (i.e. $g_n=h_n+il_n$, where $h_n,l_n$ are independent and belong to ${\mathcal N}(0,1)$). 
One can see $\mu_s$ as a probability measure on $H^\sigma(\T)$, $\sigma<s-1/2$ such that $\mu_s(H^{s-\frac{1}{2}}(\T))=0$.
According to the above discussion, one can prove the  following statement. 
\begin{theoreme}\label{prop1}
For every $s\in\R$, the measure $\mu_s$ is invariant under the flow of \eqref{1}. 
\end{theoreme}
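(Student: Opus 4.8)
The plan is to show directly that the flow of \eqref{1}, i.e.\ the linear propagator $\Phi(t):=e^{it\partial_x^2}$, pushes $\mu_s$ forward to itself for every $t\in\R$. The statement is essentially a rigorous rephrasing of the discussion preceding it, the only genuine input being the invariance of complex gaussians under rotations.

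First I would record the functional-analytic preliminaries. For $\sigma<s-\tfrac12$ one has $\mathbb{E}\,\|u_0\|_{H^\sigma(\T)}^2=\sum_{n\in\Z}(1+n^2)^{\sigma-s}<\infty$, so the series \eqref{omeg} converges in $L^2(\Omega;H^\sigma(\T))$ and almost surely, and $\mu_s$ is a well-defined centered gaussian Borel probability measure on the separable Hilbert space $H^\sigma(\T)$. By \eqref{fs}, $\Phi(t)$ acts on Fourier coefficients by $\widehat{\Phi(t)v}(n)=e^{-itn^2}\widehat v(n)$; since $|e^{-itn^2}|=1$, this is a unitary operator on $H^\sigma(\T)$, in particular continuous, hence Borel measurable, so that $(\Phi(t))_*\mu_s$ is a well-defined Borel probability measure on $H^\sigma(\T)$.

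The probabilistic content is then the following. By \eqref{fs}, $(\Phi(t))_*\mu_s$ is the law of
\begin{equation*}
\omega\longmapsto \sum_{n\in\Z}\frac{e^{-itn^2}g_n(\omega)}{(1+n^2)^{s/2}}\,e^{inx}.
\end{equation*}
Fix $t\in\R$ and put $\widetilde g_n:=e^{-itn^2}g_n$. Since $e^{-itn^2}$ is a fixed unimodular constant, rotation invariance of the standard complex gaussian gives that $\widetilde g_n$ has the law of $g_n$; and the family $(\widetilde g_n)_{n\in\Z}$ is still independent, being the image of the independent family $(g_n)_{n\in\Z}$ under deterministic maps applied coordinatewise. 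Consequently, for every finite $F\subset\Z$ the vectors $(\widetilde g_n)_{n\in F}$ and $(g_n)_{n\in F}$ have the same joint law (a product of rotation-invariant complex gaussian laws is invariant under a coordinatewise rotation), so $(\widetilde g_n)_{n\in\Z}$ and $(g_n)_{n\in\Z}$ have the same law on $\C^{\Z}$ equipped with its product $\sigma$-algebra. Pushing these laws forward by the Borel map that sends a sequence $(c_n)$ with $\sum_n(1+n^2)^{\sigma}|c_n|^2<\infty$ to $\sum_n c_ne^{inx}\in H^\sigma(\T)$ --- a map defined on the full-measure set on which the series converges --- we obtain that the two random series above have the same law, i.e.\ $(\Phi(t))_*\mu_s=\mu_s$, as claimed.

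Alternatively one may argue via characteristic functionals: a Borel probability measure on a separable Hilbert space is determined by its characteristic functional, and a standard computation gives, for $\phi\in H^{-\sigma}(\T)$,
\begin{equation*}
\widehat{\mu_s}(\phi)=\exp\Big(-c\sum_{n\in\Z}\frac{|\widehat\phi(n)|^2}{(1+n^2)^{s}}\Big)
\end{equation*}
for an absolute constant $c>0$; since the transpose of $\Phi(t)$ is again a Fourier multiplier with unimodular symbol, it leaves the right-hand side unchanged, whence the characteristic functionals of $(\Phi(t))_*\mu_s$ and of $\mu_s$ coincide and so do the two measures. I do not expect any real obstacle in either route: the only point needing (routine) care is the measure-theoretic bookkeeping --- Borel measurability of the flow on $H^\sigma(\T)$, the fact that all these transformations are only defined off a $\mu_s$-null set, and the invocation of the correct uniqueness theorem for measures on a separable Hilbert space --- after which invariance of $\mu_s$ under \eqref{1} holds for every $t\in\R$, which is precisely the assertion.
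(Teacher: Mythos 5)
Your proof is correct and follows exactly the route the paper itself indicates (the paper only sketches the argument, invoking the rotation invariance of the complex gaussians $g_n$ under the unimodular multipliers $e^{-itn^2}$, which is precisely your main step). The additional measure-theoretic bookkeeping you supply is sound and fills in what the paper leaves implicit.
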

Formally, one may see $\mu_s$ as  a normalized version of $\exp(-\|u\|_{H^s}^2)du$ and the statement of Theorem~\ref{prop1} is then a consequence of the divergence free character of the vector field generating the flow of \eqref{1} and conservation of $H^s$ norms by the flow of \eqref{1}.
\subsection{A statistical view point on the defocusing NLS on the torus}
Consider now a  (non integrable) nonlinear perturbation of \eqref{1} :
\begin{equation}\label{NLS}
(i\partial_t+\partial_x^2)u=|u|^4 u,\,\, u(0,x)=u_0(x), \quad t\in \R,\,\, x\in\T. 
\end{equation}
One can ask how much the result of Theorem~\ref{prop1} extends to the case of \eqref{NLS}. We observe that since \eqref{NLS} is a nonlinear equation, well-posedness is  a non trivial issue compared to \eqref{1} where we have an explicit formula for the solutions.  Thanks to \cite{Bo1}, we know that \eqref{NLS} is (locally) well-posed in $H^s(\T)$, $s>0$. As a byproduct of the analysis of \cite{Bo2} we can deduce the following statement.
\begin{theoreme}\label{th1}
The measure $\mu_1$ is quasi-invariant under the (well-defined)  flow of \eqref{NLS}.
\end{theoreme}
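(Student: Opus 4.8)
The plan is to realize $\mu_1$ as (the image under the Fourier map of) the Gibbs-type measure associated with the Hamiltonian of \eqref{NLS}, and then invoke Bourgain's invariant measure construction in the periodic setting. Concretely, consider the formal measure
\begin{equation}\label{gibbs}
d\rho = Z^{-1}\exp\left(-\frac{1}{6}\int_\T |u|^6\dx\right)\exp\left(-\frac12\int_\T |\partial_x u|^2 + |u|^2\dx\right)\prod_{n\in\Z}d\widehat u(n),
\end{equation}
which is the Gibbs measure for the energy $E(u)=\frac12\|\partial_x u\|_{L^2}^2+\frac16\|u\|_{L^6}^6$ weighted by the $L^2$ norm (the mass $\int |u|^2$). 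Since the defocusing sextic potential $\frac16\int|u|^6$ is a nonnegative, $\mu_1$-almost surely finite function of $u$ (here $u\in H^\sigma$ with $\sigma<1/2$, so in particular $u\in L^6(\T)$ by Sobolev embedding), the density $G(u):=\exp(-\frac16\int|u|^6)$ is in $L^\infty(d\mu_1)\cap L^1(d\mu_1)$ with $G>0$ everywhere, so $d\rho = Z^{-1}G(u)\,d\mu_1$ is a probability measure mutually absolutely continuous with $\mu_1$. Hence quasi-invariance of $\mu_1$ is \emph{equivalent} to quasi-invariance of $\rho$, and it suffices to prove the latter.

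Next I would establish that $\rho$ is in fact \emph{invariant} under the flow of \eqref{NLS}, which is strictly stronger and is exactly the content of Bourgain's analysis in \cite{Bo2}. The key steps: (i) Well-posedness on the support of the measure. By \cite{Bo1}, \eqref{NLS} is locally well-posed in $H^s(\T)$ for $s>0$; combined with the $L^2$ and energy conservation laws and Bourgain's globalization-in-measure argument (truncate to frequencies $|n|\le N$, solve the finite-dimensional ODE which preserves the truncated measure by Liouville's theorem plus conservation of the truncated Hamiltonian, then pass to the limit using a priori bounds that hold $\rho$-almost surely), one gets a global flow $\Phi_t$ defined $\rho$-a.e. and a globalization of local solutions $\mu_1$-a.e.\ (this is what makes the flow in Theorem~\ref{th1} ``well-defined''). (ii) Invariance of the finite-dimensional truncated measures $\rho_N$ under the truncated flow $\Phi_t^N$: Liouville's theorem gives invariance of $\prod_{|n|\le N}d\widehat u(n)$ under the (Hamiltonian, hence divergence-free) truncated ODE flow, and the truncated energy and mass are conserved by $\Phi_t^N$, so the full density is conserved. (iii) Convergence $\rho_N\to\rho$ in total variation (or weakly, which suffices here) and $\Phi_t^N\to\Phi_t$ in a suitable sense, yielding $(\Phi_t)_*\rho=\rho$; then $(\Phi_t)_*\mu_1 \ll \mu_1$ follows from $(\Phi_t)_*\mu_1 = (\Phi_t)_*(Z G^{-1}\rho) = Z\,(G\circ\Phi_{-t})^{-1}\,(\Phi_t)_*\rho = Z\,(G\circ\Phi_{-t})^{-1}\rho$, which has a density with respect to $\mu_1$ equal to $(G/G\circ\Phi_{-t})$, finite $\mu_1$-a.e.\ since $G$ is finite and positive $\mu_1$-a.e.\ and $\Phi_{-t}$ preserves $\mu_1$-null sets (symmetry, using invariance of $\rho$ under $\Phi_{-t}$ as well).

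The main obstacle is step (i)--(iii), i.e.\ making the measure-theoretic globalization and the passage $N\to\infty$ rigorous: one must show that the local solution map extends globally on a set of full $\mu_1$ (equivalently $\rho$) measure, which requires quantitative control of $\|\Phi_t^N u\|_{H^s}$ for $s>0$, uniform in $N$, on a large-measure set — the standard Bourgain argument trades a small exceptional-set measure against a long existence time using the invariance of $\rho_N$ itself, a bootstrap that is delicate but by now classical. A secondary point to check is that the flow constructed on the statistical ensemble agrees with the deterministic local flow of \cite{Bo1} wherever the latter exists, so that the ``well-defined flow'' in the statement is unambiguous. Once invariance of $\rho$ is in hand, the deduction of quasi-invariance of $\mu_1$ is the short absolute-continuity computation above and presents no difficulty, since the Radon--Nikodym derivative $G/(G\circ\Phi_{-t})$ is manifestly positive and finite $\mu_1$-almost everywhere.
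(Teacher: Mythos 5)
Your proposal follows exactly the route the paper takes: the paper proves Theorem~\ref{th1} by observing that the Gibbs measure $\exp(-\frac{1}{6}\|u\|_{L^6}^6)\,d\mu_1(u)$ is invariant under the flow of \eqref{NLS} by Bourgain's analysis in \cite{Bo2}, and that this measure is mutually absolutely continuous with $\mu_1$ because its density is positive and bounded $\mu_1$-almost surely. Your write-up simply fills in the standard details (finite-dimensional truncation, Liouville, measure-theoretic globalization, and the Radon--Nikodym bookkeeping) that the paper delegates to \cite{Bo1,Bo2}, and it is correct.
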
 
Here by quasi-invariance we mean that the transport of $\mu_1$ under the flow of \eqref{NLS} is absolutely continuous with respect to $\mu_1$.
This quite  remarkable property relies on dispersion in \eqref{NLS} in an essential way.
On the contrary, following \cite{OST} one would expect that for $s>1/2$, the measure $\mu_s$ is not quasi-invariant under the flow of the dispersionless version of \eqref{NLS}:
\begin{equation*}
i\partial_t u=|u|^4 u,\,\, u(0,x)=u_0(x), \quad t\in \R,\,\, x\in\T. 
\end{equation*}
Therefore the quasi-invariance property displayed by Theorem~\ref{th1} is a delicate property measuring the balance between dispersion and nonlinearity. 
Theorem~\ref{th1} follows from invariance of the Gibbs measure $\exp(-\frac{1}{6}\|u\|_{L^6}^6) d\mu_1(u)$ under the flow of \eqref{NLS} (see \cite{Bo2}). This Gibbs measure invariance uses  both basic conservation laws for \eqref{NLS} in a fundamental way: the $L^2$ (mass) conservation and the energy conservation, where the energy functional is given by
$
\frac{1}{2}\int_{\T} |\partial_x u|^2+\frac{1}{6}\int_{\T}|u|^6\,.
$
Therefore, for lack of higher order conservation laws for \eqref{NLS},  extension of Theorem~\ref{th1} to the family of measures $\mu_s$ defined by \eqref{omeg} is a non trivial issue. 
 
Our main result is that Theorem~\ref{th1} holds true for $s=2k$, where $k\geq 1$ is an integer. 
\begin{theoreme}\label{th2}
The measure $\mu_{2k}$ is quasi-invariant under the flow of \eqref{NLS}
for every integer $k\geq 1$.
\end{theoreme}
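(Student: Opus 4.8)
The plan is to establish quasi-invariance of $\mu_{2k}$ by the now-standard Tzvetkov-type energy method, but the crux will be obtaining a weighted, renormalized energy functional with good smoothing properties for the quintic NLS. First I would introduce the truncated equations: let $\pi_N$ denote the Dirichlet projector onto frequencies $|n|\le N$, and consider the truncated flow $(i\partial_t+\partial_x^2)u_N=\pi_N(|\pi_N u_N|^4\pi_N u_N)$, whose flow $\Phi_N(t)$ preserves the finite-dimensional Gaussian $\pi_N{}_*\mu_{2k}$ up to a density governed by the time derivative of $\|\pi_N u\|_{H^{2k}}^2$ along the truncated flow. The key computation is then to differentiate $\|\pi_N u_N(t)\|_{\dot H^{2k}}^2$ in time: the linear part drops out, and one is left with a multilinear expression $\mathrm{Re}\int \partial_x^{2k}(|u_N|^4u_N)\,\overline{\partial_x^{2k}u_N}$ (plus lower order). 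The worst term, where all $2k$ derivatives hit a single factor, must be integrated by parts / symmetrized to trade one derivative, using that $\partial_x^{2k}u\cdot\overline{\partial_x^{2k}u}$ pairs against lower-regularity factors; the remaining terms after this manipulation should be controllable by $\|u_N\|_{H^{2k-1/2-\epsilon}}$-type norms, which is exactly the smoothing gain announced in the abstract. This is where the techniques of \cite{PTV} (originally for $k=1$, i.e. $s=2$) must be pushed to general even integer regularity.

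Next I would set up the abstract quasi-invariance criterion. Following the scheme of \cite{OT1,PTV}, one writes the density of the transported measure as $f_N(t,u)=\exp\big(-\int_0^t \frac{d}{d\tau}\|\pi_N\Phi_N(\tau)u\|_{H^{2k}}^2\,d\tau\big)$ times a Gaussian correction, and it suffices to prove a uniform-in-$N$ bound $\sup_N \|f_N(t)\|_{L^p(d\mu_{2k})}<\infty$ for some $p>1$ on a time interval, together with convergence $\Phi_N\to\Phi$ and $f_N\to f$ in suitable senses. The $L^p$ bound reduces, via the $L^p$--$\log$ trick (using that $\int |G|^r d\mu_{2k}$ controls exponential moments when $G$ is a suitable multilinear Gaussian chaos of limited order after the smoothing), to an energy estimate of the form
\begin{equation*}
\Big|\frac{d}{dt}\|\pi_N u_N\|_{H^{2k}}^2\Big| \le C\big(\|u_N\|_{H^{2k-1/2-\epsilon}}\big)\quad\text{on }\{\|u_N\|_{H^{2k-1/2-\epsilon}}\le R\},
\end{equation*}
combined with the global-in-time control of low norms coming from mass and energy conservation for the defocusing equation (for $k=1$ this is automatic; for $k\ge2$ one propagates $H^{2k-1/2-\epsilon}$ bounds using local theory plus the density bound iteratively, a bootstrap in the spirit of \cite{OT2,PTV}). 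In the defocusing case this globalizes; in the focusing case one only has it up to the (possibly finite) blow-up time, hence the local-in-time statement — but that part is not claimed in Theorem~\ref{th2} as stated.

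The main obstacle I anticipate is the smoothing estimate itself: showing that after the integration-by-parts normalization, $\frac{d}{dt}\|\pi_N u_N\|_{H^{2k}}^2$ is controlled by norms strictly weaker than $H^{2k}$ — ideally by $H^{2k-1/2-\epsilon}$ — uniformly in $N$. For the quintic (degree-five) nonlinearity this requires a careful frequency analysis: in the resonant/near-diagonal region one exploits the modulation $|n_1^2-n_2^2+n_3^2-n_4^2+n_5^2-n^2|$ via normal form / integration by parts in time (as in \cite{Bo1,PTV}), while in the non-resonant region one uses the oscillation to gain the missing half-derivative. Handling the interaction of five factors (versus three for the cubic case treated in \cite{OT1}) multiplies the case distinctions, and one must also check the renormalization is compatible with taking $k$ large, so that the number of derivatives redistributed never exceeds what the multilinear $L^2$-based estimates allow. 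Once this estimate is in hand, the measure-theoretic part (Fatou, Egorov-type arguments, and the Yudovich/Gronwall passage from density bounds to quasi-invariance) follows the template of \cite{PTV,Tz} essentially verbatim.
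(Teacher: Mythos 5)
Your proposal reproduces the outer skeleton of the argument (truncated flows $\Phi_N$, an energy estimate with a gain of regularity, a limiting measure-theoretic step), but at the two places where the actual work happens it commits to routes that the paper identifies as precisely the ones that do \emph{not} close for the quintic NLS. First, your plan for the smoothing estimate is a frequency-side analysis: normal forms and integration by parts in time exploiting the modulation $|n_1^2-n_2^2+n_3^2-n_4^2+n_5^2-n^2|$, in the spirit of \cite{OST,OT1}. This is exactly the approach the authors say was out of reach for NLS (the resonance function for the quintic interaction does not factor usefully, unlike the cubic fourth-order or third-order models treated in those references), and it is the reason the paper exists. The actual mechanism is entirely physical-space: one builds a modified energy $E_{2k}=\|u\|_{H^{2k}}^2+R_{2k}$ by repeatedly trading two space derivatives for one time derivative (so that $\frac{d}{dt}\|\partial_x^{2k}u\|_{L^2}^2\equiv\frac{d}{dt}\|\partial_t^k u\|_{L^2}^2$ modulo acceptable terms), and then closes the estimate using the local conservation laws \eqref{contin} for the mass density $N=|u|^2$, the momentum density $J=2\Im(\bar u\partial_x u)$ and the stress-energy tensor $T$, together with the pointwise identity $J^2+(\partial_x N)^2=4N|\partial_x u|^2$. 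Without this (or some genuinely equivalent device) the "worst term" you flag, where too many derivatives land on too few factors, does not go away; no amount of symmetrization of $\Re\int\partial_x^{2k}(|u|^4u)\overline{\partial_x^{2k}u}$ alone yields the one-derivative gain.

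Second, your measure-theoretic reduction goes through a uniform $L^p(d\mu_{2k})$ bound, $p>1$, on the densities via the $L^p$--$\log$ trick, which requires the energy derivative to be (after smoothing) a Wiener chaos of low order with Gaussian-type tails --- this is estimate \eqref{OLD} with power $2$. But the functional $F^{(M)}_{2k}$ one actually obtains satisfies only \eqref{NEW_pak_again} with a possibly large power $m_0$, and $\exp$ of such a high-degree functional is not $\mu_{2k}$-integrable, so the $L^p$ scheme of \cite{Tz,OT1} does not apply. The paper replaces it with a deterministic argument: one works with the weighted measure $d\rho_{2k,M}=f_M\,d\mu_{2k}$, restricts to balls $B_R$ of $H^{2k-\frac12-\epsilon}$ (which are \emph{not} flow-invariant but are controlled by the global well-posedness bounds of Proposition~\ref{cauchy-sigma}), and runs a Liouville-plus-Gronwall estimate $\rho_{2k,M}(\Phi_M(s)(D))\le e^{Cs}\rho_{2k,M}(D)$ requiring only pointwise bounds on the logarithmic derivative of the density over $B_R$. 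Your restriction to the set $\{\|u_N\|_{H^{2k-1/2-\epsilon}}\le R\}$ gestures in this direction, but as written the reduction to an $L^p$ bound with exponential moments of a chaos of limited order is a step that would fail for this equation.
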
 
It is worth mentioning that in the completely integrable case a huge literature has been devoted to prove invariance, and hence quasi-invariance, 
of (weighted)  Gaussian measures associated with higher order conserved energies 
(see for instance \cite{Z} for KdV and cubic NLS, \cite{DTV}, \cite{TV1}, \cite{TV2}
for the Benjamin-Ono equation, 
\cite{GLV1}, \cite{GLV2}, \cite{ONRS}, \cite{NRSS} for DNLS). Of course the main point in Theorem \ref{th2}
is that \eqref{NLS} is not completely integrable.
\\

In \cite{Tz} the author introduced a new method (inspired by \cite{DTV,TV1,TV2}) to study quasi-invariance of gaussian measures along the flow associated with dispersive equations. This approach was further generalized to much more involved situations in \cite{OST,OT1,OT2,OTT}. In particular, in \cite{OT2} a multi-linear stochastic argument was introduced.  In the present paper we follow the general strategy from \cite{Tz}, up to two new and crucial ingredients. 
\\

First, in the study of the measure evolution, we invoke a more deterministic approach based on knowledge about individual trajectories; in particular, working on regions of phase space that are left invariant by the flow is irrelevant. This approach forces us to work with $L^2$ type spaces but has the advantage to leave an important freedom in the quantitative bounds (see the discussion after Theorem~\ref{determinist}  below).
\\

Second, the most important novelty in our proof of Theorem~\ref{th2} is a subtle improvement of the modified energy method of our previous work \cite{PTV}. In order to state the precise estimate implied by this improvement,  we introduce  some notations.
For $M\geq 0$ an integer, we denote by $\pi_M$, the Dirichlet projector defined by
$$
\pi_M\Big(
\sum_{n\in\Z}c_n e^{inx}
\Big)
=
\sum_{|n|\leq M}c_n e^{inx}\,.
$$
We also use the convention that for $M=\infty$, $\pi_{M}={\rm Id}$.
We next consider the following truncated version of \eqref{NLS}
\begin{equation}\label{NLS_N}
(i\partial_t+\partial_x^2)u=\pi_M\big(|\pi_M u|^4 \pi_M u\big),\,\, u(0,x)=u_0(x), \quad t\in \R,\,\, x\in\T. 
\end{equation}
We denote the flow of \eqref{NLS_N} by $\Phi_M(t)$. Then $\Phi(t):=\Phi_{\infty}(t)$ is  the flow associated with our true nonlinear equation \eqref{NLS}.
We can now state the basic modified energy estimate that we shall need in order to prove Theorem~\ref{th2}. 
We believe that this statement is relevant for its own sake.
\begin{theoreme}\label{determinist}
Let $k\geq 1$ be an integer. 
There is an 
integer $m_0>0$
and a positive constant $C$ such that the following holds true. There exists a functional $E_{2k}(u)$ such that
$$
E_{2k}(u)=\|u\|_{H^{2k}}^2 + R_{2k}(u),\quad E_{2k}(0)=0
$$ 
with
\begin{equation}\label{NEW}
 |R_{2k}(u)-R_{2k}(v)|\leq C\|u-v\|_{H^{2k-1}}
\big( 1+ \|u\|_{H^{2k-1}}^{m_0}+\|v\|_{H^{2k-1}}^{m_0}\big)
\end{equation}
and moreover for every  $M\in \N\cup \{\infty\}$,
\begin{equation}\label{modifene}
\frac d{dt} E_{2k}(\pi_M \Phi_M(t)(u_0))= F^{(M)}_{2k}(\pi_M \Phi_M(t)(u_0)),
\end{equation}
where the functional $F^{(M)}_{2k}(u)$ satisfies 
\begin{equation}\label{NEW_pak_2k}
 |F^{(M)}_{2k}(u)|\leq C \big( 1+ \|u\|_{H^{2k-1}}^{m_0}\big),\quad k\geq 2\,,
\end{equation}
and
\begin{equation}\label{NEW_pak_2}
 |F^{(M)}_{2}(u)|\leq C \big( 1+ \|u\|_{H^{1}}^{m_0}\big)\big(1+\|\partial_x u\|^4_{L^4}\big)\,.
\end{equation}
\end{theoreme}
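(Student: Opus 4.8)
The plan is to construct $E_{2k}$ by the modified energy method: start from the conserved-for-the-linear-flow quantity $\|u\|_{H^{2k}}^2 = \sum_n (1+n^2)^{2k}|\widehat u(n)|^2$, compute its time derivative along the truncated flow \eqref{NLS_N}, and then peel off, one order at a time, the terms in the derivative that are ``too high'' — i.e. those that cannot be controlled by $\|u\|_{H^{2k-1}}$ alone — by absorbing them into a correction $R_{2k}(u)$ whose own time derivative is of lower order. Concretely, I would write $E_{2k}(u) = \|u\|_{H^{2k}}^2 + R_{2k}(u)$ with $R_{2k}$ a finite sum of $2k$-linear (in $u,\bar u$) space integrals of the form $\int \prod (\partial_x^{a_j} u^{(\pm)})$, chosen so that the worst frequency interactions in $\frac{d}{dt}\|u\|_{H^{2k}}^2$ are cancelled. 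The key mechanism, exactly as in \cite{PTV}, is that when one differentiates $\|u\|_{H^{2k}}^2$ along \eqref{NLS_N}, the term where all $2k$ derivatives land on the highest-frequency factor is, after integration by parts and using the resonance relation $\sum n_j = 0$, of lower order than naive counting suggests; iterating this normal-form/integration-by-parts procedure a bounded number of times produces $R_{2k}$ and reduces the effective number of derivatives on the highest factor.

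For the structural bound \eqref{NEW}: since $R_{2k}(u)$ is a finite sum of multilinear expressions of homogeneity $2k$ in which the top number of derivatives on any single factor is at most $2k-1$ (with the remaining derivatives distributed so that the total does not exceed what Leibniz allows), the difference $R_{2k}(u)-R_{2k}(v)$ telescopes into a sum of multilinear terms each containing one factor of $u-v$; estimating that factor in $H^{2k-1}$ and all other factors in $H^{2k-1}$ (using the algebra property of $H^{2k-1}$ for $2k-1\geq 1$, i.e.\ $k\geq 1$, together with $L^\infty \hookrightarrow$ Sobolev and Gagliardo--Nirenberg to redistribute derivatives) gives precisely \eqref{NEW} with some $m_0$ depending only on $k$. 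The bound is uniform in $M$ because $\pi_M$ is a bounded operator on every $H^s$ with norm $\leq 1$ and commutes with $\partial_x$.

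For \eqref{modifene}: by construction $F^{(M)}_{2k}(u) := \frac{d}{dt}E_{2k}(\pi_M\Phi_M(t)(u_0))$ is again a finite sum of multilinear space integrals evaluated at $v = \pi_M\Phi_M(t)(u_0)$, in which — after the normal-form cancellations — the highest number of derivatives on a single factor is $\leq 2k-1$ for $k\geq 2$, yielding \eqref{NEW_pak_2k} by the same multilinear/Sobolev estimates (note $v = \pi_M v$, so $\pi_M$ costs nothing). The case $k=1$ is genuinely borderline: there the normal form cannot remove all second-derivative factors, and one is left with terms essentially of the form $\int (\partial_x u)^2 \cdot(\text{lower})$, which one controls by $\|\partial_x u\|_{L^4}^2$ (or $\|\partial_x u\|_{L^4}^4$ after Cauchy--Schwarz) times a polynomial in $\|u\|_{H^1}$, giving \eqref{NEW_pak_2}; here one exploits that the quintic nonlinearity contributes enough factors of $u\in H^1 \hookrightarrow L^\infty$ to close.

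The main obstacle is the construction of $R_{2k}$ for general $k$ and the verification that, after the iterated integration-by-parts/normal-form procedure, no single factor carries more than $2k-1$ derivatives and no uncontrollable resonant term survives: one must track the resonance function $\sum_{j} \pm n_j^2$ against $\sum_j \pm n_j = 0$ and check that the ``high $\times$ high $\times$ low $\cdots$'' interactions are the only potentially dangerous ones and that they are tamed by a single integration by parts (moving a derivative off the top factor), uniformly in $M$. Organizing this combinatorial bookkeeping — and isolating the exceptional $k=1$ case where one extra power of dispersion is unavailable — is the heart of the argument; everything else is routine multilinear Sobolev analysis.
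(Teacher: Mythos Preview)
Your proposal has a genuine gap at its core. You assert that after an ``iterated integration-by-parts/normal-form procedure, no single factor carries more than $2k-1$ derivatives,'' and you attribute the mechanism to \cite{PTV} together with Fourier-side resonance analysis (tracking $\sum \pm n_j^2$ against $\sum n_j=0$). But this is precisely what fails for second-order NLS. The paper states explicitly (see the remark following \eqref{tinavita}) that once all straightforward Leibniz expansions and substitutions of the equation are carried out, one is left with a term --- essentially $\Re\big(u\,\partial_t^{2k-1}(|u|^4),\, i\partial_t u\big)$ --- which, expanded naively, yields expressions with $4k$ space derivatives shared between only \emph{two} factors, hence $2k$ derivatives on a single factor, uncontrollable by $H^{2k-1}$. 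The normal-form mechanism you invoke is the approach of \cite{OST,OT1,OT2,OTT}, and the introduction stresses that the NLS case was out of reach of those techniques: second-order dispersion does not provide enough gain from non-resonance to close. Your claim that the dangerous interactions ``are tamed by a single integration by parts'' is therefore not correct here.

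What the paper actually does is a purely physical-space argument. It introduces the mass density $N=|u|^2$, the momentum density $J=2\Im(\bar u\,\partial_x u)$, and the stress-energy tensor $T$, and exploits the \emph{local} conservation laws $\partial_t N+\partial_x J=0$ and $\partial_t J+\partial_x T=0$ (Lemma~\ref{cle}). For $k\geq 2$ it first trades $2k$ space derivatives for $k$ time derivatives, showing $\frac{d}{dt}\|\partial_x^{2k}u\|_{L^2}^2\equiv \frac{d}{dt}\|\partial_t^{k}u\|_{L^2}^2$; the residual ``bad'' terms are then organised into quantities $N_0,N_1,J_0$ (see \eqref{defN1J0}, \eqref{N0,N1,J0}), and the heart of the proof is a system of linear relations among them (Lemmas~\ref{cergy} and~\ref{fin}), obtained by repeatedly swapping $\partial_t N\leftrightarrow -\partial_x J$ and $\partial_t J\leftrightarrow -\partial_x T$, which forces $N_0\equiv N_1\equiv J_0\equiv 0$. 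This algebraic cancellation, coming from the density-level conservation laws, is invisible from the Fourier/resonance viewpoint and is the genuinely new ingredient; without it the reduction to at most $2k-1$ derivatives per factor cannot be completed.
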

We believe that \eqref{NEW_pak_2k} can be improved to a tame estimate but this is not of importance for our purposes. 
\\

Estimates \eqref{NEW_pak_2k} and \eqref{NEW_pak_2} show that the quintic NLS enjoys a new form of one derivative smoothing with respect to the nonlinearity.  Such a one derivative smoothing is of course well-known for the nonlinear wave equation NLW and  it may easily be seen when we write NLW as a first order system.   We find it remarkable that NLS  which a priori does not have the favorable structure of NLW  satisfies such a  (nonlinear) one derivative smoothing when writing suitable modified energy estimates that we will introduce later on.
\\

In the next section we prove that Theorem~\ref{determinist}  implies Theorem~\ref{th2} by a generic argument.
Let us observe that estimates \eqref{NEW_pak_2k} and \eqref{NEW_pak_2} imply the bound
\begin{equation}\label{NEW_pak_again}
 |F^{(M)}_{2k}(u)|\leq C \big( 1+ \|u\|_{H^{2k-\frac 12 -\epsilon}}^{m_0}\big),
\end{equation}
for a suitable $\epsilon=\epsilon(k)>0$ and $m_0=m_0(k)$. 
In the remaining part of this paper we will consider $\mu_{2k}$ as a measure on the space $H^{2k-\frac{1}{2}-\epsilon}$ with the choice of $\epsilon$ such that \eqref{NEW_pak_again} holds.
\\

It is worth comparing the approach in this paper to that of \cite{Tz}.  In order to apply the method of \cite{Tz}, 
it suffices to prove Theorem~\ref{determinist} with $F^{(M)}_{2k}(u)$ satisfying 
\begin{equation}\label{OLD}
|F^{(M)}_{2k}(u)|\leq C \big(\|u\|_{H^1}\big)\big( 1+ \|u\|_{C^{2k-\frac 12 -\epsilon}}^{2}\big)\,,
\end{equation}
where $C^{\alpha}$ denotes the H\"older space of order $\alpha$ and a similar estimate for $R_{2k}(u)$.
Observe that for $m_0\leq 2$ we have that  \eqref{NEW_pak_again} implies \eqref{OLD}.
For $m_0>2$ neither of \eqref{NEW_pak_again} and \eqref{OLD} implies the other. 
The estimate \eqref{OLD} has the advantage to involve a stronger $C^{2k-\frac 12 -\epsilon}$  
norm on the righthand side while estimate \eqref{NEW_pak_again} has the great advantage to allow any power $m_0$. 
In the context of the problem considered in this paper estimate \eqref{NEW_pak_again} 
was easier to achieve for us, especially for large values of $k$.
\\

Let us also mention that a difficulty that was  present in \cite{OT2} and which we do not face here is the need of {\it renormalisations} in the energy estimates. 
The main novelty in our present analysis when compared to \cite{OT2} (and also \cite{PTV}) is the key introduction of  several new correction terms in the {\em modified energies} that we shall construct, which allow to finally get the smoothing displayed by Theorem~\ref{determinist}. 
Let us also mention \cite{OST,OT1}  where the correction terms in the energy estimates are constructed via normal form transformations. This approach exploits the smoothing coming from the non resonant part of the nonlinearity. It would be interesting to find situations where the approaches of  \cite{OST,OT1} and the one used in this paper can collaborate.
We finally mention \cite{Kwon}, \cite{Kenig}, \cite{Koch}, \cite{OV} where modified energy techniques have been used in the context of dispersive equations.
\\

We believe that Theorem~\ref{th2} can be extended to odd integers  by further elaborations on our arguments. 
Finally, we have some hope to extend our results to $2d$. In this case renormalisation arguments in the spirit of \cite{OT2} would most certainly be required.
\subsection{Local in time quasi-invariance in the focusing case}
In this section, we consider the focusing equation
\begin{equation}\label{NLS_focus}
(i\partial_t+\partial_x^2)u=-|u|^4 u,\,\, u(0,x)=u_0(x), \quad t\in \R,\,\, x\in\T. 
\end{equation}
Because of blow-up phenomena  (see e.g. \cite{OgTs}), we cannot expect that the flow of \eqref{NLS_focus} is $\mu_{2k}$ a.s. defined (globally in time). 
We only have local well-posedness on the support of $\mu_{2k}$ and  quasi-invariance of 
our gaussian measures may only hold locally. Here is a precise statement.
\begin{theoreme}\label{th3}
Let $k\geq 1$ be an integer. 
For every $R>0$ there is $T>0$ such that the following holds true. For every $u_0\in H^{2k-\frac 12 - \epsilon}$ such that
$
\|u_0\|_{H^{2k-\frac 12 - \epsilon}}<R
$
there is a unique solution $u$ of \eqref{NLS_focus} in $C([-T,T];H^{2k-\frac 12 - \epsilon})$. 
Moreover, if 
$$
A\subset \Big \{u\in H^{2k-\frac 12 - \epsilon} \,|\,  \|u\|_{H^{2k-\frac 12 - \epsilon}}<R\Big\}
$$ 
is such that $\mu_{2k}(A)=0$
then  
$$
\mu_{2k}(A_t)=0,\quad t\in[-T,T],
$$
where $A_t$ is the transport of $A$ by the solution map :
$$
A_t=
\big\{u(t), \,\, {\rm solution\, of\,  \eqref{NLS_focus}\, with\, datum}\,  u_0\, {\rm in }\, A
\big\}
\,.
$$
\end{theoreme}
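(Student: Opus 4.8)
The plan is to mirror, in the focusing case, the argument by which Theorem~\ref{determinist} yields Theorem~\ref{th2}, but now only locally in time, using that the blow-up obstruction is invisible on the time scale dictated by $R$. First I would establish the local well-posedness statement: on $H^{2k-\frac12-\epsilon}$ (which for $k\ge1$ is well above $H^0$), the sign of the nonlinearity plays no role in the contraction-mapping / Bourgain-space fixed point argument, so the local theory of \cite{Bo1} and its persistence of regularity give a unique solution in $C([-T,T];H^{2k-\frac12-\epsilon})$ whenever $\|u_0\|_{H^{2k-\frac12-\epsilon}}<R$, with $T=T(R)>0$ and a uniform-in-data bound $\sup_{[-T,T]}\|u(t)\|_{H^{2k-\frac12-\epsilon}}\le C(R)$; the same holds for the truncated flows $\Phi_M(t)$ uniformly in $M$, and $\Phi_M(t)u_0\to\Phi(t)u_0$ in $H^{2k-\frac12-\epsilon}$ as $M\to\infty$.

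Next I would re-run the modified energy machinery. Theorem~\ref{determinist} is stated for the defocusing equation \eqref{NLS}, but inspecting its proof (to be given later) one sees that the construction of $E_{2k}$, the remainder bound \eqref{NEW}, the identity \eqref{modifene} and the smoothing bounds \eqref{NEW_pak_2k}--\eqref{NEW_pak_2} depend only on the algebraic structure of the quintic nonlinearity, not on its sign; replacing $|u|^4u$ by $-|u|^4u$ merely flips signs of the correction terms and of $F^{(M)}_{2k}$, which is harmless. Hence \eqref{NEW_pak_again} holds for \eqref{NLS_focus} as well. Now fix $R$ and the associated $T=T(R)$ from the local theory. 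For initial data with $\|u_0\|_{H^{2k-\frac12-\epsilon}}<R$ we control $\|\pi_M\Phi_M(t)u_0\|_{H^{2k-\frac12-\epsilon}}\le C(R)$ on $[-T,T]$, so \eqref{modifene} together with \eqref{NEW_pak_again} gives
\begin{equation*}
\Big|\frac{d}{dt}E_{2k}(\pi_M\Phi_M(t)u_0)\Big|\le C\big(1+C(R)^{m_0}\big)=:C_1(R),\qquad t\in[-T,T],
\end{equation*}
uniformly in $M$. Combined with the Lipschitz bound \eqref{NEW} on $R_{2k}$ and the stability of the flow, this is exactly the input needed for the abstract transport argument (à la \cite{Tz}, in the form used to derive Theorem~\ref{th2}): one introduces the weighted, truncated measures $d\rho_{M,t}=e^{-E_{2k}(\pi_M\Phi_M(t)u_0)}\,d\mu_{2k}$ restricted to the ball $\{\|u\|_{H^{2k-\frac12-\epsilon}}<R\}$, differentiates in $t$, uses \eqref{modifene} to bound the time derivative of $\rho_{M,t}$ of the ball by $C_1(R)\rho_{M,t}(\text{ball})$, applies Gronwall on $[-T,T]$, and passes to the limit $M\to\infty$ using \eqref{NEW} and the convergence of the flows to identify the limit with the genuine transported measure. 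The conclusion is that, restricted to the ball of radius $R$, the transport of $\mu_{2k}$ by $\Phi(t)$ for $|t|\le T$ is mutually absolutely continuous with $\mu_{2k}$; in particular $\mu_{2k}(A)=0$ forces $\mu_{2k}(A_t)=0$ for $t\in[-T,T]$, which is the claim.

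The main obstacle is precisely the point at which the focusing case departs from Theorem~\ref{th2}: there is no a priori global bound on $\|u(t)\|_{H^{2k-\frac12-\epsilon}}$ for $\mu_{2k}$-typical data, so one cannot iterate the local statement over $[0,\infty)$, and the time $T$ must be tied to $R$ from the outset — this is why the statement is phrased with "for every $R$ there is $T$". Concretely one must be careful that the energy increment $C_1(R)$ is controlled only as long as the solution (and all its truncations $\pi_M\Phi_M(t)u_0$) stay in the ball of radius $\sim C(R)$, which holds on $[-T(R),T(R)]$ by the local theory but could fail beyond it; so the Gronwall step and the $M\to\infty$ limit must both be carried out within that fixed window, using the uniform-in-$M$ local bounds rather than any global conservation. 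Everything else is a routine adaptation of the defocusing argument.
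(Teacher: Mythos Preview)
Your proposal is correct and follows essentially the same route as the paper: observe that the construction and bounds of Theorem~\ref{determinist} are insensitive to the sign of the nonlinearity, replace the global-in-time ingredients (Propositions~\ref{cauchy-sigma} and~\ref{sigma}) by their local-in-time analogues valid on $[-T(R),T(R)]$ uniformly in $M$, and then rerun verbatim the transport/Gronwall argument from the proof of Theorem~\ref{th2} on that fixed time window. The paper's proof is in fact just this outline, with the focusing version of Theorem~\ref{determinist} and the local analogues of the two propositions stated explicitly.
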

As already alluded to, whether local in time quasi-invariance holds along focusing problems was raised by Bourgain in \cite[page~28]{Bo_proceeding}.  
\\

Let us finally mention again that  for  sake of simplicity we stated our results only for the nonlinear interaction $|u|^4 u$. 
However, these results may be extended to more general nonlinearities of type $F(|u|^2)u$, $F\geq 0$ ; in particular, $F(|u|^{2})u=|u|^{2k+4} u$, $k \in \mathbb{N}$ requires only routine modifications of our arguments.
\\

The remaining part of the paper is organised as follows. In the next section we show how  Theorem~\ref{th2} can be reduced to energy estimates. 
Then in Section~\ref{sect3}, we prove Theorem~\ref{determinist}. The last section is devoted to the proof of Theorem~\ref{th3}
\\

{\bf Acknowledgement.} We are grateful to Tadahiro Oh for suggesting that we consider the focusing case and for useful remarks on an earlier draft of the manuscript. We also warmly thank the referees for their remarks which helped us to improve the overall presentation of our results.
\section{Reduction to  energy estimates}
In this section, we outline how  Theorem~\ref{determinist} implies Theorem~\ref{th2}.  First, define the following measures: 
$$d\rho_{2k}=f(u) d\mu_{2k}, \quad d\rho_{2k,M}=f_M(u)d\mu_{2k}$$
where, 
$$
f(u)=\exp\big(E_{2k}(u)-\|u\|_{H^{2k}}^2\big)=\exp(R_{2k}(u)),\quad f_M(u)=f(\pi_M u)
$$ 
and $E_{2k}(u)$ is the energy from Theorem~\ref{determinist}.
Next, we shall also use the following representation
$$ d\rho_{2k,M}=\gamma_M \exp{(-E_{2k}(\pi_M u))} du_1\dots du_M \times d\mu_{2k,M}^\perp$$ 
where $\gamma_M$ is a suitable renormalization constant, $du_1\dots du_M$
is the Lebesgue measure on $\C^M$ and $d\mu_{2k,M}^\perp$
is the gaussian measure induced by the random series (compare with \eqref{omeg})
\begin{equation*}
\omega\longmapsto \sum_{n\in\Z, |n|>M}\frac{g_n(\omega)}{(1+n^2)^{k}}\, e^{inx} \,.
\end{equation*}
In the sequel we shall use the following notation
\begin{equation}\label{biar}
B_R:=\Big \{u\in H^{2k-\frac 12 - \epsilon} \,|\,  \|u\|_{H^{2k-\frac 12 - \epsilon}}<R
\Big \}
\end{equation}
where $\epsilon>0$ is suitable fixed number (small enough) in such a way that
the inequality \eqref{NEW_pak_again} is satisfied.
We shall need the following approximation property.
\begin{proposition}\label{appp}
For every $R>0$,
\begin{equation}\label{approx}
\lim_{M\rightarrow \infty}\int_{B_R} |f_M(u) - f(u)|d\mu_{2k}(u)=0.
\end{equation}
\end{proposition}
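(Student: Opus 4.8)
The plan is to show that $f_M(u) = \exp(R_{2k}(\pi_M u)) \to f(u) = \exp(R_{2k}(u))$ in $L^1(B_R, d\mu_{2k})$, and the natural route is to combine pointwise convergence $\mu_{2k}$-a.s. with a uniform integrability / domination argument. First I would record the key elementary inequality $|e^a - e^b| \le |a-b|(e^a + e^b)$, so that
\[
\int_{B_R} |f_M(u)-f(u)|\,d\mu_{2k}(u) \le \int_{B_R} |R_{2k}(\pi_M u) - R_{2k}(u)|\,\big(f_M(u)+f(u)\big)\,d\mu_{2k}(u).
\]
The difference $|R_{2k}(\pi_M u) - R_{2k}(u)|$ is controlled by the Lipschitz-type bound \eqref{NEW} with $v = \pi_M u$: since $\|u - \pi_M u\|_{H^{2k-1}} \to 0$ for $\mu_{2k}$-a.e. $u$ (indeed $\mu_{2k}$ is supported on $H^{2k-\frac12-\epsilon}\supset H^{2k-1}$... more precisely $\mu_{2k}(H^{2k-1})=1$ since $2k-1 < 2k - 1/2$), and since $\|\pi_M u\|_{H^{2k-1}} \le \|u\|_{H^{2k-1}}$, we get
\[
|R_{2k}(\pi_M u) - R_{2k}(u)| \le C\,\|u-\pi_M u\|_{H^{2k-1}}\big(1 + 2\|u\|_{H^{2k-1}}^{m_0}\big) \xrightarrow[M\to\infty]{} 0 \quad \mu_{2k}\text{-a.s.}
\]
This gives the pointwise (a.s.) convergence to $0$ of the integrand on $B_R$; note the factor $f_M + f$ is a priori not bounded since $R_{2k}$ need not be bounded below, so a crude dominated convergence is not immediate.

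Second, to pass to the limit I would establish uniform integrability of the family $\{|R_{2k}(\pi_M u) - R_{2k}(u)|(f_M(u)+f(u))\}_M$ on $B_R$, for which it suffices to bound their $L^p(B_R,d\mu_{2k})$ norms uniformly in $M$ for some $p>1$. By H\"older it is enough to control $\|R_{2k}(\pi_M u) - R_{2k}(u)\|_{L^{2p}(d\mu_{2k})}$ and $\|f_M(u)+f(u)\|_{L^{2p}(d\mu_{2k})}$ separately. For the first factor: from \eqref{NEW} with $v=0$ (using $R_{2k}(0)=0$) one gets $|R_{2k}(w)| \le C\|w\|_{H^{2k-1}}(1+\|w\|_{H^{2k-1}}^{m_0})$, so both $R_{2k}(u)$ and $R_{2k}(\pi_M u)$ have polynomial growth in $\|u\|_{H^{2k-1}}$; since $u\mapsto \|u\|_{H^{2k-1}}$ has Gaussian integrability under $\mu_{2k}$ (it is a measurable linear-type functional with finite moments of all orders — Fernique), all $L^q(d\mu_{2k})$ norms of these quantities are finite and bounded uniformly in $M$. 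The genuinely delicate point is the uniform integrability of $f_M$: one needs $\sup_M \int_{B_R} f_M(u)^{q}\,d\mu_{2k}(u) = \sup_M \int_{B_R}\exp(q R_{2k}(\pi_M u))\,d\mu_{2k}(u) < \infty$ for some $q>1$. Since $R_{2k}$ is only Lipschitz on $H^{2k-1}$-balls and not obviously sign-definite, this requires showing $\exp(qR_{2k})$ has a uniform-in-$M$ Gaussian-integrable bound; the cleanest argument uses that on $B_R$ one can control $\|u\|_{H^{2k-1}}$ by $\|u\|_{H^{2k-\frac12-\epsilon}}^\theta \|u\|_{H^{2k-1+\epsilon'}}^{1-\theta}$-type interpolation... but more honestly, the natural estimate is $|R_{2k}(\pi_M u)| \le C(1+\|u\|_{H^{2k-\frac12-\epsilon}})(1+\|\pi_M u\|_{H^{2k-1}}^{m_0})$ is \emph{not} available from \eqref{NEW}, so instead I would exploit that $\|u\|_{H^{2k-1}}$ has a \emph{sub-Gaussian} tail under $\mu_{2k}$ — in fact $\mu_{2k}(\|u\|_{H^{2k-1}} > \lambda) \le e^{-c\lambda^2}$ — which beats the polynomial-exponential $\exp(C\lambda^{m_0+1})$ growth of $f_M$ only if $m_0+1 \le 2$, i.e. $m_0\le 1$; for larger $m_0$ one genuinely needs the sharper structure of $R_{2k}$. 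This is the \textbf{main obstacle}, and I expect it to be resolved by the refinement over \cite{PTV} that Theorem~\ref{determinist} advertises, namely that the correction terms making up $R_{2k}$ actually obey a tame bound of the form $|R_{2k}(u)| \le C\|u\|_{H^{2k-1}}(1+\|u\|_{H^{2k-1-\delta}}^{m_0})$ with all the high-regularity weight on a \emph{lower}-order norm, so that on $B_R$ the quantity $R_{2k}(\pi_M u)$ is bounded by $C_R(1+\|u\|_{H^{2k-1}})$ with $\|u\|_{H^{2k-1}}$ sub-Gaussian — whence $\exp(qR_{2k}(\pi_M u))$ is uniformly $\mu_{2k}$-integrable.

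Once uniform integrability is in hand, I would conclude by a Vitali-type convergence theorem: the integrand converges to $0$ $\mu_{2k}$-a.s. on $B_R$ (first paragraph) and is uniformly integrable (second paragraph), hence the integral in \eqref{approx} tends to $0$. Schematically:
\[
\int_{B_R}|f_M - f|\,d\mu_{2k} \le \Big(\int_{B_R}|R_{2k}(\pi_M u)-R_{2k}(u)|^{p'}d\mu_{2k}\Big)^{1/p'}\!\!\Big(\sup_M\int_{B_R}(f_M+f)^p d\mu_{2k}\Big)^{1/p},
\]
and — once it is known the right factor is finite uniformly in $M$ — the left factor goes to $0$ by dominated convergence (its integrand $\to 0$ a.s.\ and is dominated by $C(1+\|u\|_{H^{2k-1}}^{p'(m_0+1)})\in L^1(d\mu_{2k})$, which holds since $\|u\|_{H^{2k-1}}$ has finite moments of every order under $\mu_{2k}$). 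I would also remark that it is harmless to first replace $B_R$ by $B_R \cap \{\|u\|_{H^{2k-1}} \le N\}$, let $M\to\infty$, then $N\to\infty$, if one prefers to avoid invoking uniform integrability of $f_M$ directly and instead only needs $f\in L^1(d\mu_{2k})$ together with the a.s.\ bound $f_M \le f \cdot \exp(C\|u-\pi_M u\|_{H^{2k-1}}(1+\|u\|_{H^{2k-1}}^{m_0})) \to f$; the tail $\{\|u\|_{H^{2k-1}}>N\}$ then contributes $\int_{\{\|u\|_{H^{2k-1}}>N\}}(f_M+f)\,d\mu_{2k}$, which is small uniformly in $M$ precisely when $\exp(2R_{2k}(\pi_M u))$ is uniformly integrable — so the obstacle is the same either way.
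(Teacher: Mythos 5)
Your first paragraph (pointwise a.e.\ convergence of $R_{2k}(\pi_M u)$ to $R_{2k}(u)$ via \eqref{NEW}) is fine, but the ``main obstacle'' on which you spend the rest of the argument does not exist, and your proposal does not actually close it: you end by speculating that a tame bound on $R_{2k}$, with the high-regularity weight shifted to a lower-order norm, would rescue uniform integrability, but no such bound is stated in Theorem~\ref{determinist} and none is needed. The point you have missed is that the integral in \eqref{approx} is taken over $B_R$, which by \eqref{biar} is a ball in $H^{2k-\frac12-\epsilon}$ with $\epsilon$ small (in particular $\epsilon\le 1/2$), so that for every $u\in B_R$ and every $M$ one has $\|\pi_M u\|_{H^{2k-1}}\le\|u\|_{H^{2k-1}}\le\|u\|_{H^{2k-\frac12-\epsilon}}<R$. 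Taking $v=0$ in \eqref{NEW} (recall $R_{2k}(0)=E_{2k}(0)=0$) then gives $|R_{2k}(u)|,\,|R_{2k}(\pi_M u)|\le CR\big(1+2R^{m_0}\big)=:C_R$ uniformly on $B_R$, hence $f$ and all the $f_M$ are bounded by $e^{C_R}$ there. No Fernique estimate, no comparison of sub-Gaussian tails against $\exp(C\lambda^{m_0+1})$ growth, and no uniform integrability argument is required; in particular the dichotomy ``works only if $m_0\le 1$'' is irrelevant, because the randomness of $\|u\|_{H^{2k-1}}$ plays no role on $B_R$.

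With this observation your first paragraph already finishes the proof: on $B_R$ the integrand $|f_M(u)-f(u)|$ tends to $0$ for $\mu_{2k}$-a.e.\ $u$ and is dominated by the constant $2e^{C_R}$, so dominated convergence yields \eqref{approx}. The paper's own proof is the same in substance, organized slightly differently: it uses \eqref{NEW} together with the uniform bound on $B_R$ to get convergence of $R_{2k}(\pi_M u)$ to $R_{2k}(u)$ in $L^2(B_R,\mu_{2k})$, deduces convergence of $f_M$ to $f$ in measure, and concludes by combining the uniform $L^2(B_R)$ bound on $f_M$ and $f$ with the Cauchy--Schwarz inequality on the small exceptional set. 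Either route is correct, but only once one uses that membership in $B_R$ controls the $H^{2k-1}$ norms entering \eqref{NEW}.
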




\begin{proof}
We consider $B_R$ equipped with the measure $\mu_{2k}$. 
Thanks to \eqref{NEW}, we have that  $R_{2k}(\pi_M u)$ converges to $R_{2k}(u)$ in $L^2(B_R)$ and as a consequence 
$R_{2k}(\pi_M u)$ converges to $R_{2k}(u)$ in measure, i.e. 
$$
\lim_{M\rightarrow\infty} \mu_{2k}
\{
u\in B_R\, |\,  |R_{2k}(\pi_M u)-R_{2k}( u)|>\gamma
\}=0,\quad \forall\, \gamma>0\,.
$$
As  a consequence $f_{M}(u)$ converges to $f(u)$ in measure too, i.e. 
\begin{equation}\label{q1}
\lim_{M\rightarrow\infty} \mu_{2k}
\{
u\in B_R\, |\,  |f_{M}(u)-f( u)|>\gamma
\}=0,\quad \forall\, \gamma>0\,.
\end{equation}
Next, thanks to \eqref{NEW}, we have that there is a constant $C>0$ depending on $R$ such that
\begin{equation}\label{q2}
\int_{B_R}|f_M(u)|^2 d\mu_{2k}(u)+\int_{B_R} |f(u)|^2 d\mu_{2k}(u)<C,\quad \forall\, M\in\N.
\end{equation}
Let us now turn to the proof of \eqref{approx}. 
Fix $\gamma>0$ and
set 
$$
A_{M,\gamma}:=\Big \{u\in B_R\, |\,  |f_{M}(u)-f( u)|>\frac{\gamma}{2}\Big \}\,.
$$
We can write
$$
\int_{B_R} |f_M(u) - f(u)|d\mu_{2k}(u)<\frac{\gamma}{2}+\int_{A_{M,\gamma}}|f_M(u) - f(u)|d\mu_{2k}(u)\,.
$$
Using \eqref{q2} and the Cauchy-Schwarz inequality, we can write
$$
\int_{A_{M,\gamma}}|f_M(u) - f(u)|d\mu_{2k}(u)< C\big(\mu_{2k}(A_{M,\gamma})\big)^{\frac{1}{2}}\,
$$
where $C>0$ is another constant that depends on $R$.
Thanks to \eqref{q1} there is $M_0$ such that for $M\geq M_0$,
$$
\big(\mu_{2k}(A_{M,\gamma})\big)^{\frac{1}{2}}<\frac{\gamma}{2C}\,.
$$
This completes the proof of Proposition~\ref{appp}.
\end{proof}
As a consequence of the global in time analysis for the Cauchy problem, we have the following statement.
\begin{proposition}\label{cauchy-sigma}
Let $t>0$.
For every $R>0$ there is $C(R)>0$ such that 
$$
 \Phi_M(s)(B_R)\subset B_{C(R)}, \quad \forall\, s\in [0, t],\quad \forall\, M\in \N\cup \{\infty\}\,.
$$
\end{proposition}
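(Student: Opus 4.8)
The plan is to get this a priori bound as a soft consequence of the global well-posedness theory of the defocusing quintic NLS on $\T$ together with its two lower-order conservation laws; let me stress at once that \emph{the modified energy of Theorem~\ref{determinist} is not needed here}, and that the growth in $t$ of the constant we shall produce is harmless since $t$ is fixed. The first step is to dispose of the high frequencies: applying $1-\pi_M$ to \eqref{NLS_N} and using $(1-\pi_M)\pi_M=0$ shows that $(1-\pi_M)\Phi_M(s)(u_0)=e^{is\partial_x^2}(1-\pi_M)u_0$, so that
\[
\|(1-\pi_M)\Phi_M(s)(u_0)\|_{H^\sigma}=\|(1-\pi_M)u_0\|_{H^\sigma}\le\|u_0\|_{H^\sigma}\qquad(\sigma\in\R,\ s\in\R,\ M\in\N\cup\{\infty\});
\]
hence it suffices to bound $\pi_M\Phi_M(s)(u_0)$ in $H^{2k-\frac{1}{2}-\epsilon}$, uniformly in $M$ and in $s\in[0,t]$.

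Next I would invoke the conservation laws. The flow $\Phi_M(s)$ preserves the mass $\|u\|_{L^2}^2$ and the energy $\frac{1}{2}\|\partial_x u\|_{L^2}^2+\frac{1}{6}\|\pi_M u\|_{L^6}^6$, because \eqref{NLS_N} is the Hamiltonian system generated by this energy (for finite $M$ this is just a finite-dimensional Hamiltonian ODE on the modes $|n|\le M$ coupled to the free evolution on the modes $|n|>M$, which moreover gives global existence of $\Phi_M(s)$). Since $\|u_0\|_{H^1}\le\|u_0\|_{H^{2k-\frac{1}{2}-\epsilon}}<R$, the embedding $H^1(\T)\hookrightarrow L^6(\T)$ then yields, uniformly in $M\in\N\cup\{\infty\}$ and $s\in\R$,
\[
\|\Phi_M(s)(u_0)\|_{H^1}^2\le\|u_0\|_{L^2}^2+\|\partial_x u_0\|_{L^2}^2+\tfrac{1}{3}\|u_0\|_{L^6}^6\le C(R).
\]

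It remains to upgrade this $H^1$ bound to an $H^{2k-\frac{1}{2}-\epsilon}$ bound on $[0,t]$, which is a persistence-of-regularity statement inside the well-posedness theory of \cite{Bo1,Bo2}. Using the multilinear $X^{\sigma,b}$ estimates for the quintic nonlinearity, one gets on a time step $\delta=\delta(R)>0$ a bound of the form $\|\Phi_M(\cdot)(u_0)\|_{X^{\sigma,b}([s_0,s_0+\delta])}\le 2\|\Phi_M(s_0)(u_0)\|_{H^\sigma}$ valid for every $\sigma\ge1$, the step $\delta$ depending only on the lower norm already under control by the conservation laws; iterating $\lceil t/\delta\rceil$ times with $\sigma=2k-\frac{1}{2}-\epsilon$ (legitimate since $2k-\frac{1}{2}-\epsilon>1$ for every $k\ge1$) produces $\|\pi_M\Phi_M(s)(u_0)\|_{H^{2k-\frac{1}{2}-\epsilon}}\le C(R,t)$ for all $s\in[0,t]$. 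Combined with the first paragraph this gives $\Phi_M(s)(B_R)\subset B_{C(R)}$ for $s\in[0,t]$ and all $M\in\N\cup\{\infty\}$.

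The one point that genuinely requires attention is the uniformity of all these constants with respect to $M$ (and its agreement with the case $M=\infty$); this is routine, since the truncated nonlinearity $\pi_M(|\pi_M u|^4\pi_M u)$ obeys the same multilinear and conservation-law bounds as $|u|^4u$, the Dirichlet projector $\pi_M$ being a projection of norm $1$ on $L^2$, on every $H^\sigma$ and on the relevant $X^{\sigma,b}$ spaces. In short, I do not expect a serious obstacle here: this proposition only records the soft global control of the flow, in contrast with the genuinely delicate smoothing estimate of Theorem~\ref{determinist}.
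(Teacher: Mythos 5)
Your argument is correct and is exactly the standard route the paper has in mind (the paper itself gives no proof, deferring to \cite[Proposition~2.6]{Tz}, whose proof is precisely this combination of the free evolution of the high modes, mass/energy conservation for the truncated Hamiltonian flow, and persistence of regularity with a time step controlled by the conserved $H^1$ norm, iterated over $[0,t]$ uniformly in $M$). The only cosmetic point is that the conserved energy involves $\|\pi_M u_0\|_{L^6}^6$ rather than $\|u_0\|_{L^6}^6$, which changes nothing since $\|\pi_M u_0\|_{L^6}\lesssim\|u_0\|_{H^1}$ uniformly in $M$.
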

The proof of Proposition~\ref{cauchy-sigma} can be done exactly as in  \cite[Proposition~2.6]{Tz}.
In the sequel  the following proposition will be useful.
\begin{proposition}\label{sigma}
Let $A\subset H^{2k-\frac 12 -\epsilon}$ be a compact set, then for every $\delta>0$ there exists $M_0\in \N$ such that for every $M\geq M_0$,
$$
\Phi(s) (A)\subset \Phi_M(s) (A+B_\delta), \quad \forall s\in [0, t]\, .
$$
\end{proposition}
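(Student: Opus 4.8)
The plan is to realize each point of $\Phi(s)(A)$ as the truncated flow of a nearby datum. Fix $s\in[0,t]$ and $u_0\in A$. Since we are in the defocusing setting, both $\Phi(s)=\Phi_\infty(s)$ and $\Phi_M(s)$ are globally defined groups, so $\Phi_M(s)$ is a bijection with inverse $\Phi_M(-s)$, and the only natural candidate for a preimage is obtained by flowing the true solution backward under the truncated dynamics:
\[
v_0:=\Phi_M(-s)\big(\Phi(s)(u_0)\big).
\]
By construction $\Phi_M(s)(v_0)=\Phi(s)(u_0)$, so the inclusion $\Phi(s)(A)\subset\Phi_M(s)(A+B_\delta)$ reduces entirely to checking that $v_0$ lies within distance $\delta$ of $A$ in the $H^{2k-\frac{1}{2}-\epsilon}$ norm, uniformly in $u_0\in A$ and $s\in[0,t]$. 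As $u_0\in A$, it suffices to prove $\|v_0-u_0\|_{H^{2k-\frac{1}{2}-\epsilon}}<\delta$, with $M_0$ uniform in $s$.

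The key algebraic point is that $u_0=\Phi_M(-s)\big(\Phi_M(s)(u_0)\big)$, whence
\[
v_0-u_0=\Phi_M(-s)\big(\Phi(s)(u_0)\big)-\Phi_M(-s)\big(\Phi_M(s)(u_0)\big).
\]
I would control the right-hand side by two ingredients, both uniform in $M\in\N$ and $s\in[0,t]$. First, a uniform Lipschitz bound for the (time-reversible) solution map on bounded sets: compactness of $A$ fixes $R$ with $A\subset B_R$, Proposition~\ref{cauchy-sigma} places both $\Phi(s)(u_0)$ and $\Phi_M(s)(u_0)$ in $B_{C(R)}$, and the local well-posedness theory for \eqref{NLS_N}---with constants independent of $M$, because $\pi_M$ is a contraction on every $H^\sigma$---yields a constant $L=L(R,t)$ with $\|\Phi_M(-s)(a)-\Phi_M(-s)(b)\|_{H^{2k-\frac{1}{2}-\epsilon}}\leq L\|a-b\|_{H^{2k-\frac{1}{2}-\epsilon}}$ for all $a,b\in B_{C(R)}$. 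Second, the uniform forward approximation
\[
\sup_{u_0\in A}\ \sup_{s\in[0,t]}\ \|\Phi_M(s)(u_0)-\Phi(s)(u_0)\|_{H^{2k-\frac{1}{2}-\epsilon}}\longrightarrow 0\quad(M\to\infty).
\]
Combining these and choosing $M_0$ so that the forward approximation is below $\delta/L$ for $M\geq M_0$ gives $\|v_0-u_0\|_{H^{2k-\frac{1}{2}-\epsilon}}\leq L\,\|\Phi(s)(u_0)-\Phi_M(s)(u_0)\|_{H^{2k-\frac{1}{2}-\epsilon}}<\delta$, which is exactly the claim.

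The main work---and the step I expect to be the real obstacle---is the uniform forward approximation. Pointwise in $u_0$ it follows from a Gronwall argument on $[0,t]$ comparing \eqref{NLS} and \eqref{NLS_N}: in the Duhamel difference, the nonlinear discrepancy $|u|^4u-\pi_M\big(|\pi_M u|^4\pi_M u\big)$ tends to $0$ in $H^{2k-\frac{1}{2}-\epsilon}$ as $M\to\infty$ for $u$ ranging over the bounded set $B_{C(R)}$ supplied by Proposition~\ref{cauchy-sigma}, while the remaining terms are absorbed by the $M$-uniform Lipschitz estimate for the nonlinearity on $B_{C(R)}$. To upgrade from pointwise convergence to uniformity over the compact set $A$, I would invoke a standard equicontinuity/covering argument: the maps $\{\Phi_M(s)\}_{M,s}$ and $\Phi(s)$ are uniformly Lipschitz on $B_R$, so covering $A$ by finitely many small balls reduces uniform convergence on $A$ to pointwise convergence at the finitely many centers. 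This is precisely where the \emph{compactness} of $A$ is indispensable; boundedness alone would not suffice to produce a single $M_0$ valid on all of $A$.
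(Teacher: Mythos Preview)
Your argument is correct and follows the same route as the one the paper defers to: the paper gives no self-contained proof here but points to \cite[Proposition~2.10]{Tz}, where the proof proceeds exactly by writing $v_0=\Phi_M(-s)\big(\Phi(s)(u_0)\big)$, invoking the $M$-uniform Lipschitz continuity of the truncated flow on bounded sets together with the uniform-on-compacta convergence $\Phi_M\to\Phi$, and then using compactness of $A$ to pass from pointwise to uniform approximation---precisely the two ingredients you isolate.
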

The proof of Proposition~\ref{sigma} can be done exactly as in  \cite[Proposition~2.10]{Tz}.
\\
\begin{proof}[Proof of Theorem~\ref{th2}]
Fix $t>0$. We can conclude the proof of Theorem~\ref{th2} provided that we show the following implication 
$$A\subset H^{2k-\frac 12 -\epsilon} \hbox{ compact and }
\mu_{2k}(A)=0 \Longrightarrow \mu_{2k}(\Phi(t)(A))=0.$$
In fact once the quasi-invariance is established for any compact set $A$ then, by a classical
argument of measure theory, the same property can be extended to every
measurable set.
By elementary considerations the above implication is equivalent
to the following one
$$A\subset H^{2k-\frac 12 -\epsilon} \hbox{ compact and }
\rho_{2k}(A)=0 \Longrightarrow \rho_{2k}(\Phi(t)(A))=0.
$$

First of all by compactness we can fix $R>0$ such that
$A\subset B_R$  and by Proposition~\ref{cauchy-sigma}
we have
\begin{equation}\label{2R}
\Phi(s) (B_{2R}) \cup \Phi_M(s)(B_{2R})\subset B_{C(R)}, \quad \forall s\in [0, t]
\end{equation}
for a suitable constant $C(R)>0$.
Next, by Liouville theorem  and invariance of complex gaussians by rotations, we get for $D\subset B_{2R}$ a generic measurable set :
\begin{eqnarray*}
\big| \frac d{ds} \rho_{2k,M}(\Phi_M(s)(D))\big | & = &  \big| \frac d{ds} \int_{\Phi_M(s)(D)} f_M(u) d\mu_{2k}(u) \big |
\\
&= &\gamma_M  \big| \frac d{ds} \int_{\Phi_M(s)(D)} \exp{(-E_{2k}(\pi_M u))} du_1\dots du_M \times d\mu_{2k,M}^\perp\big|
\\
&= & \gamma_M \big|\int_{D} \frac d{ds} \exp{(-E_{2k}(\Phi_M(s)(\pi_M u)))} du_1\dots du_M \times d\mu_{2k,M}^\perp \big|\,.
\end{eqnarray*}
Now we use  \eqref{modifene}  of Theorem~~\ref{determinist} 
(along with the estimate \eqref{NEW_pak_again}) and  Proposition~\ref{cauchy-sigma} to get the following key bound: 
\begin{multline*}
 \forall \,s\in [0, t],\, \forall \,u\in D\,,\,\,\,
|\frac d{ds} \exp{(-E_{2k}(\Phi_M(s)(\pi_M u)))}|\nonumber \leq C
 \exp{(-E_{2k}(\Phi_M(s)(\pi_M u)))}
\end{multline*}
where $C>0$ depends on $R$.
Consequently,  for $s\in[0,t]$, we have the bound 
\begin{multline*}
\big| \frac d{ds} d\rho_{2k,M}(
\Phi_M(s)(D))\big | \\
 \leq 
 \gamma_M C  \int_{D} \exp{(-E_{2k}(\Phi_M(s)(\pi_M u)))} du_1\dots du_M \times d\mu_{2k,M}^\perp 
 \\ = C\int_{\Phi_M(s)(D)} f_M(u)d\mu_{2k}(u) = 
C
\rho_{2k,M}(\Phi_M(s)(D)).
\end{multline*}
By using Gronwall lemma we get for every $M\in\N$ and every $s\in [0, t]$,
\begin{equation}\label{zvezda}
\rho_{2k,M}(\Phi_M(s)(D)) = \int_{\Phi_M(s)(D)} f_M(u) d\mu_{2k}(u)
\leq \exp (Cs) \rho_{2k,M}(D) \,. 
\end{equation}
Next,  notice that by Proposition~\ref{sigma} we have
\begin{equation}\label{pizcer}
\rho_{2k}(\Phi(t)(A))=\int_{\Phi(t)(A)} f(u)d\mu_{2k}(u)\leq \int_{\Phi_M(t) (A+B_\delta )} f(u)d\mu_{2k} (u)
\end{equation}
for any $\delta>0$ fixed, provided that $M$ is large. 
Moreover, thanks to Proposition~\ref{appp} we have 
\begin{equation}\label{dil}
\lim_{M\rightarrow \infty}\int_{B_{2R}} |f_M(u)-f(u)|d\mu_{2k}(u)=0 
\end{equation}
and hence by choosing $M$ large enough we can continue  \eqref{pizcer} as follows
$$
\rho_{2k}(\Phi(t)(A))  \leq  \int_{\Phi_M(t) (A+B_\delta)} f_M(u)d\mu_{2k}(u) +\delta \,.
$$
Now, we apply \eqref{zvezda} with $D=A+B_\delta$ (for $\delta<R$). This yields 
$$
\rho_{2k}(\Phi(t)(A)) \leq  \exp (Ct)\int_{A+B_{\delta}} f_M(u) d\mu_{2k}(u) +\delta\,.
$$
Coming back to \eqref{dil}, we obtain that for $M$ large enough
$$
\rho_{2k}(\Phi(t)(A))\leq
 \exp (Ct)\int_{A+B_{\delta}} f(u) d\mu_{2k}(u) +2\delta\,.
$$
Passing to the limit $\delta\rightarrow 0$ and since $A$ is compact, we get
$$
\lim_{\delta\rightarrow 0}\int_{A+B_{\delta}} f(u) d\mu_{2k}(u) =\int_{A} f(u) d\mu_{2k}(u)=\rho_{2k}(A)=0.
$$
Therefore  $\rho_{2k}(\Phi(t)(A))=0$.
This completes the proof of  Theorem~\ref{th2} (assuming that Theorem~\ref{determinist} holds true).
\end{proof}
\section{Proof of Theorem~\ref{determinist}}\label{sect3}

Using that the flow $\Phi_M(t)$ is continuous on $H^{s}(\T)$ for every $s\geq 1$
and a density argument, 
we may suppose from now on that the initial condition $u_0$ is in $C^\infty(\T)$,
and hence $\Phi_M(t)(u_0)\in C^\infty(\R\times \T)$.\\

In order to better highlight the key points in the argument, we first prove Theorem~\ref{determinist} in the case $k=1$, $M=\infty$. In a second step we generalize it to the case $k>1$, $M=\infty$. Finally, we show how to treat the case $M<\infty$.\\

From now on, for clarity's sake, we shall use the notation $\lesssim$ in
order to denote a lesser or equal sign $\leq$ up to a positive multiplicative constant C, that in turn
may depend harmlessly on contextual parameters. 

\subsection{Proof of Theorem~\ref{determinist} for $k=1$, $M=\infty$}\label{k=1}
We first consider the case $k=1$, $M=\infty$, namely we prove the estimate  for the flow $\Phi(t)(u_0)$.  
We denote by $\tilde{\mathcal L}_{2,k_0}$ and $\tilde{\mathcal P}_{2,k_0}$ the 
sets of functionals defined as follows:
\begin{multline}\label{L2}
\tilde {\mathcal L}_{2,k_0}:=\Big\{ F(u(t,x))\,\, |  \,\ u(t,x)\in C^\infty(\R\times \T),
\,\,\, F: C^\infty(\T)\rightarrow \R,\\|F(u)|\lesssim \big( 1+ \|u\|_{H^{1}}^{k_0}\big)\big(1+\|\partial_x u\|^4_{L^4}\big)\Big\}
\end{multline}
and
\begin{multline}\label{P2}
\tilde {\mathcal P}_{2, k_0}:=\Big\{\frac d{dt} G(u(t,x))  \,\,| \,\,
u(t,x)\in C^\infty(\R\times \T),\,\,\, G: C^\infty(\T)\rightarrow \R, \,\, \\
G(0)=0,\,\,\,
|G(u)-G(v)|\lesssim \|u-v\|_{H^{1}}\big(1+\|u\|_{H^{1}}^{k_0} +\|v\|_{H^{1}}^{k_0} \big)
\Big\}.
\end{multline}
We introduce a useful  notation for a given couple of functionals $F_1, F_2:C^\infty(\T)\rightarrow \R$:
\begin{multline}\label{equivk1}
F_1(u(t,x))\equiv F_2(u(t,x)) \iff \\F_1(u(t,x))-F_2(u(t,x))=F(u(t,x))+\frac d{dt} G(u(t,x)), \\
 \quad \hbox{ where } \quad F\in \tilde{\mathcal L}_{2,k_0}, \quad
\frac d{dt} G(u(t,x))\in \tilde{\mathcal P}_{2,k_0} \hbox{ for some }  k_0\in\N.
\end{multline}
Let $u(t,x)$ be a smooth solution of \eqref{NLS}.  
With the notation $\equiv$ introduced in \eqref{equivk1}, our statement is equivalent to proving that
$
\frac d{dt} \|u\|_{H^{2}}^2\equiv 0.
$
We compute, using the equation solved by $u$:
$$
\frac d{dt} \|\partial_x^2  u\|_{L^2}^2= 2 {\Re} (\partial_x^2 \partial_t  u, \partial_x^2  u)=2 {\Re} (\partial_x^2 \partial_t  u, -i\partial_t  u +  u| u|^4)\,
$$
where $(.,.)$ is the usual $L^2$ scalar product.
The contribution of $-i\partial_t  u$ vanishes by integration by parts. 
Observe that the defocusing nature of the equation is not of importance. 
Therefore,
\begin{align*}\label{vv1}
\frac d{dt} \|\partial_x^2  u\|_{L^2}^2 & = 2 {\Re} (\partial_t  \partial_x^2  u,   u| u|^4)\\
  & =2 \frac d{dt} {\Re} (\partial_x^2 u, u|u|^4)- 2{\Re} \big(\partial_x^2 u, \partial_t (u|u|^4)\big)\,.
\end{align*}
By integration by parts and Sobolev embedding $H^1\subset L^\infty$ we get, 
$$
|{\Re} (\partial_x^2 u, u|u|^4)-{\Re} (\partial_x^2 v, v|v|^4)|\lesssim \|u-v\|_{H^1}(\|u\|_{H^1}^5+\|v\|_{H^1}^5)
$$
and in particular 
\begin{equation}\label{gif}\frac d{dt} {\Re} (\partial_x^2 u, u|u|^4)\in \tilde{\mathcal P}_{2,5}.
\end{equation}
Then we get 
\begin{equation}\label{forum0}
\frac d{dt} \|\partial_x^2 u\|_{L^2}^2\equiv - 2{\Re} \big(\partial_x^2 u, \partial_t (u|u|^4)\big)\,.
\end{equation}
Next, notice that
\begin{align}\label{gifyvette}
{\Re} \big(\partial_x^2 u, \partial_t (u|u|^4)\big) &=   {\Re} (\partial_x^2 u, \partial_t u |u|^4) + {\Re} \big(\partial_x^2 u, u\partial_t (|u|^4)\big)
\\\nonumber
 &=  {\Re} (\partial_x^2 u, \partial_t u |u|^4) + \frac 12 \big(\partial_x^2 |u|^2, \partial_t (|u|^4)\big) - \big(|\partial_x u|^2, \partial_t (|u|^4)\big) \,.
\end{align}
Using that $u$ solves \eqref{NLS}, we get
$$
 {\Re} (\partial_x^2 u, \partial_t u |u|^4)=  {\Re} (\partial_x^2 u , -i |u|^8 u)={\Im} \big(\partial_x u, u \partial_x (|u|^{8})\big)\equiv 0,
$$
where in the last step we used again Sobolev embedding $H^1\subset L^\infty$
in order to get
$
\big|{\Im} \big(\partial_x u, u \partial_x (|u|^{8})\big)\big|\lesssim \|u\|_{H^1}^{10}\,
$
and hence
\begin{equation}\label{yvette}{\Im} \big(\partial_x u, u \partial_x (|u|^{8})\big)\in \tilde{\mathcal L}_{2,10}.\end{equation}
Therefore going back to \eqref{gifyvette} we get
\begin{equation}\label{forum}
{\Re} \big(\partial_x^2 u, \partial_t (u|u|^4)\big) 
\equiv
 \frac 12 \big(\partial_x^2 |u|^2, \partial_t (|u|^4)\big) - \big(|\partial_x u|^2, \partial_t (|u|^4)\big) \,.
\end{equation}
Next, we introduce the mass density 
\begin{equation}\label{defN}N:=|u|^{2}
\end{equation} and we notice that we can write the first term on the r.h.s. in \eqref{forum} as follows:
\begin{eqnarray*}
\frac 12 \big(\partial_x^2 N, \partial_t (N^2)\big)&= &-\frac 12 \big(\partial_x N, \partial_t \partial_x(N^2)\big) = 
- \big(\partial_x N, \partial_t (\partial_x N N)\big)
\\\
& = & - \frac d{dt} \big(\partial_x N, \partial_x N N\big)
+   \big( \partial_t \partial_x N, \partial_x N N\big)
\\
&=& -\frac d{dt} \big(\partial_x N, \partial_x N N\big)
+ \frac{1}2\big( \partial_t (\partial_x N)^2, N\big)
\\
&= & -\frac d{dt} \big(\partial_x N, \partial_x N N\big)+ \frac{1}2 \frac d{dt}  \big((\partial_x N)^2, N\big)-\frac{1}2  \big((\partial_x N)^2, \partial_t N\big)  
\\
&= & - \frac{1}2 \frac d{dt}  \big((\partial_x N)^2, N\big)-\frac{1}2  \big((\partial_x N)^2, \partial_t N\big)  \,.
\end{eqnarray*}
Arguing as along the proof of \eqref{gif} we get
\begin{equation}\label{easP}
\frac d{dt} ((\partial_x N)^2, N)=\frac d{dt} ((\partial_x |u|^2)^2, |u|^2)\in \tilde {\mathcal P}_{2,5},
\end{equation}
and hence
\begin{equation}\label{K1=K2}
\frac 12 \big(\partial_x^2 N, \partial_t (N^2)\big)\equiv -\frac{1}2  \big((\partial_x N)^2, \partial_t N\big)  \,.
\end{equation}
Next, by combining \eqref{forum0}, \eqref{forum} and \eqref{K1=K2}, we get
\begin{equation}\label{forum2}
\frac d{dt} \|\partial_x^2 u\|_{L^2}^2 \equiv \big((\partial_x N )^2, \partial_t N\big)+2 \big(|\partial_x u|^2, \partial_t (N^2)\big)
\,.
\end{equation}
Introducing the momentum density 
\begin{equation}\label{defJ}J:=2\Im (\bar u \partial_{x} u),
\end{equation} one easily checks that
\begin{equation}\label{eleele}
J^2+(\partial_x N)^2=4 N |\partial_x u|^2,
\end{equation}
where $N$ is given by \eqref{defN},
and therefore 
by \eqref{forum2}
\begin{equation}\label{modified}
\frac d{dt} \|\partial_x^2 u\|_{L^2}^2 \equiv 2\big( \partial_t N,  (\partial_x N )^2\big)
+
\big( \partial_t N, J^2\big)
\,.
\end{equation}
Notice that if we blindly substitute $\partial_t u$, using equation \eqref{NLS}, we still get a term $\partial_x^2 u$ that we precisely seek to avoid.
Precisely, the goal will be to split this second derivative that we would get by substitution into two first order derivatives hence avoiding second order derivatives on the r.h.s.
This analysis goes much beyond our previous work \cite{PTV}, and it will crucially rely on the following two conservation laws at the density level. First we introduce along with $N$  and $J$ the stress-energy tensor
\begin{equation}\label{defT}
T:=4|\partial_{x} u|^{2}-\partial_x^2  N +\frac{4}{3} N^3\,,
\end{equation}
then we have the two following key identities, which provide, when integrated, the mass and momentum conservations.
\begin{lemme}\label{cle}
The following identities hold 
\begin{equation}\label{contin}
\partial_{t} N+\partial_{x} J=\partial_{t} J+\partial_{x} T=0\,.
\end{equation}
\end{lemme}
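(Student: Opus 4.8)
The plan is to verify the two continuity identities in Lemma~\ref{cle} by direct computation from the equation \eqref{NLS}, using $N=|u|^2=u\bar u$ and $J=2\Im(\bar u\partial_x u)=-i(\bar u\partial_x u-u\partial_x\bar u)$. This is the standard local conservation law computation for NLS, so there is no conceptual obstacle; the work is bookkeeping with the product rule and the Schr\"odinger equation $i\partial_t u=-\partial_x^2 u+|u|^4 u$ (equivalently $\partial_t u=i\partial_x^2 u-i|u|^4 u$).

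\textbf{Mass conservation $\partial_t N+\partial_x J=0$.} First I would compute $\partial_t N=\partial_t(u\bar u)=(\partial_t u)\bar u+u(\partial_t\bar u)=2\Re(\bar u\,\partial_t u)$. Substituting $\partial_t u=i\partial_x^2 u-i|u|^4 u$, the term $-i|u|^4 u$ contributes $2\Re(-i|u|^6)=0$, so $\partial_t N=2\Re(i\bar u\,\partial_x^2 u)=-2\Im(\bar u\,\partial_x^2 u)$. Next, from $J=-i(\bar u\partial_x u-u\partial_x\bar u)$ one differentiates in $x$: $\partial_x J=-i(\bar u\partial_x^2 u-u\partial_x^2\bar u)=-i\cdot 2i\,\Im(\bar u\,\partial_x^2 u)=2\Im(\bar u\,\partial_x^2 u)$, since the cross terms $\partial_x\bar u\partial_x u$ cancel. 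Adding gives $\partial_t N+\partial_x J=0$.

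\textbf{Momentum conservation $\partial_t J+\partial_x T=0$.} Here I would write $J=2\Im(\bar u\partial_x u)$ and compute $\partial_t J=2\Im(\partial_t\bar u\,\partial_x u+\bar u\,\partial_x\partial_t u)$. Substituting $\partial_t u=i\partial_x^2 u-i|u|^4 u$ (and its conjugate), the dispersive part will, after integration-by-parts-type manipulation at the density level, produce $\partial_x(4|\partial_x u|^2-\partial_x^2 N)$ up to a constant, and the nonlinear part $-i|u|^4u$ will produce the gradient of $\tfrac{4}{3}N^3$: indeed $2\Im(\overline{(-i|u|^4u)}\partial_x u+\bar u\partial_x(-i|u|^4 u))$ reduces, using $\Im(i|u|^4\bar u\partial_x u)=|u|^4\Re(\bar u\partial_x u)=\tfrac12|u|^4\partial_x N$ and similar terms, to $-\tfrac43\partial_x(N^3)=-\partial_x(\tfrac43 N^3)$ — the factor $\tfrac43$ coming from $N^2\cdot\partial_x N\cdot 3=\partial_x(N^3)$ balanced against the power $|u|^4=N^2$. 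Combining, $\partial_t J=-\partial_x\big(4|\partial_x u|^2-\partial_x^2 N+\tfrac43 N^3\big)=-\partial_x T$, which is \eqref{contin}. The one place to be careful is the precise numerical coefficient in the dispersive part (getting $4|\partial_x u|^2-\partial_x^2 N$ rather than some other linear combination of $|\partial_x u|^2$, $\Re(\bar u\partial_x^2 u)$ and $\partial_x^2 N$); I would resolve this by using the identity $\partial_x^2 N=2\Re(\bar u\partial_x^2 u)+2|\partial_x u|^2$ to rewrite everything in terms of $N$-derivatives and $|\partial_x u|^2$ and then matching.

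\textbf{Main obstacle.} There is really no deep obstacle — the identities are the classical NLS local conservation laws. The only mild subtlety is algebraic: tracking the imaginary parts and the factor-of-two conventions (note $J$ carries a factor $2$, and $N^3$ the coefficient $\tfrac43$) so that the nonlinear contributions assemble exactly into $\partial_x(\tfrac43 N^3)$ and the linear ones into $\partial_x(4|\partial_x u|^2-\partial_x^2 N)$. I would present the computation compactly, perhaps noting that $\partial_t J+\partial_x T=0$ is equivalent, after multiplying by test functions, to the usual momentum conservation $\tfrac{d}{dt}\int J=0$, which serves as a consistency check on the coefficients.
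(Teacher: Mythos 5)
Your proposal is correct: the paper states Lemma~\ref{cle} without proof, as these are the classical local conservation laws for mass and momentum of NLS, and your direct computation is exactly the standard verification. I checked the coefficients: the nonlinear contribution to $\partial_t J$ is $N^2\partial_x N-5N^2\partial_x N=-4N^2\partial_x N=-\partial_x(\tfrac43 N^3)$ and the dispersive contribution is $-2\Re(\partial_x^2\bar u\,\partial_x u)+2\Re(\bar u\,\partial_x^3 u)=-\partial_x\bigl(4|\partial_x u|^2-\partial_x^2 N\bigr)$, so everything assembles into $\partial_t J+\partial_x T=0$ as you claim.
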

In view of  \eqref{modified}, we set 
\begin{equation}\label{defN1J0}
N_1:=\int \partial_t N (\partial_x N )^2, \quad
J_0:=\int \partial_t N J^2\,,
\end{equation}
so that we have 
\begin{equation}\label{biagtibe}
\frac d{dt} \|\partial_x^2 u\|_{L^2}^2 \equiv 2N_{1}+J_{0}\,.
\end{equation}
\begin{lemme} \label{cergy}
We have the following relations 
\begin{equation}\label{tatifr}
2 N_1+ \frac 12 J_0\equiv 0,\quad  J_0= 0\,.
\end{equation}
As a consequence $ N_1\equiv J_0\equiv 0$. 
\end{lemme}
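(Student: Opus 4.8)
The strategy is to use the conservation laws from Lemma~\ref{cle} to exchange the time derivative $\partial_t N$ appearing in $N_1$ and $J_0$ for a spatial derivative, and then to integrate by parts so that the total number of spatial derivatives in each integrand is at most one per factor (thus landing in $\tilde{\mathcal L}_{2,k_0}$) or producing a clean total time derivative (landing in $\tilde{\mathcal P}_{2,k_0}$). First I would treat $J_0=\int \partial_t N\, J^2$: substituting $\partial_t N=-\partial_x J$ from \eqref{contin} and integrating by parts gives $J_0=\int \partial_x(J^2)\,J = \int \tfrac13\partial_x(J^3)=0$ exactly, since we integrate a perfect $x$-derivative over $\T$. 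This establishes the second identity in \eqref{tatifr}, and it is an honest equality, not merely an equivalence.

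For the first identity $2N_1+\tfrac12 J_0\equiv 0$, since $J_0=0$ it suffices to show $N_1\equiv 0$, but the point is that one should prove the combination is $\equiv 0$ using only the densities, so that the same bookkeeping works uniformly; concretely I would again write $\partial_t N = -\partial_x J$ in $N_1=\int \partial_t N\,(\partial_x N)^2$, integrate by parts to move the derivative off $J$, producing $N_1 = \int J\,\partial_x\big((\partial_x N)^2\big) = 2\int J\,\partial_x N\,\partial_x^2 N$. Now the integrand contains $\partial_x^2 N$, a second derivative, which is exactly the kind of term we must dispose of. The device is to note $J\,\partial_x N\,\partial_x^2 N = \tfrac12 J\,\partial_x\big((\partial_x N)^2\big)$ and integrate by parts once more to trade it against $\tfrac12\int \partial_x J\,(\partial_x N)^2 = -\tfrac12\int \partial_t N\,(\partial_x N)^2 = -\tfrac12 N_1$, which is circular; so instead one uses the \emph{second} conservation law $\partial_t J=-\partial_x T$. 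Rewriting $N_1$ symmetrically and substituting $T=4|\partial_x u|^2-\partial_x^2 N+\tfrac43 N^3$ via \eqref{defT}, the dangerous $\partial_x^2 N$ inside $T$ can be integrated by parts to split into two first-order pieces, while the $|\partial_x u|^2$ and $N^3$ contributions are manifestly in $\tilde{\mathcal L}_{2,k_0}$ after using $H^1\subset L^\infty$ (exactly as in the derivations of \eqref{gif}, \eqref{yvette}, \eqref{easP}), and the terms that are total time derivatives — obtained by reversing $\tfrac{d}{dt}$ out of expressions like $\int \partial_x N\,\partial_x N\,N$ as in the computation preceding \eqref{K1=K2} — land in $\tilde{\mathcal P}_{2,k_0}$ provided one checks the Lipschitz bound \eqref{P2}, again via Sobolev embedding. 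Collecting these pieces yields $2N_1+\tfrac12 J_0\equiv 0$.

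The final assertion $N_1\equiv J_0\equiv 0$ is then immediate: $J_0=0$ identically from the first step, and feeding this into $2N_1+\tfrac12 J_0\equiv 0$ gives $2N_1\equiv 0$, hence $N_1\equiv 0$.

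\emph{Main obstacle.} The delicate point is the treatment of $N_1$: a naive substitution of either conservation law reproduces $N_1$ and is circular, and the honest route forces a second-order derivative $\partial_x^2 N$ into the integrand. The crux is to combine \emph{both} identities in \eqref{contin} — using $\partial_t N=-\partial_x J$ once and $\partial_t J=-\partial_x T$ once — together with the explicit form \eqref{defT} of $T$, so that every second derivative either integrates by parts into a product of first derivatives (acceptable in $\tilde{\mathcal L}_{2,k_0}$ via $H^1\subset L^\infty$) or gets absorbed into a total time derivative of a functional satisfying the Lipschitz bound \eqref{P2}. Verifying that bound for each correction term, keeping track of the polynomial power $k_0$, is the main bookkeeping burden, but it is routine in the style of \eqref{gif}–\eqref{easP}.
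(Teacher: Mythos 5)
Your treatment of $J_0$ is correct and matches the paper: substituting $\partial_t N=-\partial_x J$ from \eqref{contin} turns $J_0$ into $-\int \partial_x J\, J^2=-\tfrac13\int\partial_x(J^3)=0$, an exact identity. The gap is in the first relation. You correctly diagnose that manipulating $N_1$ alone is circular, but the escape you propose does not actually engage: after writing $N_1=2\int J\,\partial_x N\,\partial_x^2 N$ there is no occurrence of $\partial_t J$ anywhere in the integrand, so the second conservation law $\partial_t J=-\partial_x T$ has nothing to act on; and if instead you substitute $\partial_x^2 N=4|\partial_x u|^2+\tfrac43 N^3-T$ from \eqref{defT}, the $|\partial_x u|^2$ and $N^3$ pieces are indeed harmless, but the remaining piece $-2\int J\,\partial_x N\, T$ is itself equivalent to $+N_1$ (this is exactly the computation \eqref{forum12} in the paper), so you land back on the tautology $N_1\equiv N_1$.

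The missing idea is the pointwise identity \eqref{eleele}, $J^2+(\partial_x N)^2=4N|\partial_x u|^2$, which forces you to treat the combination $N_1+J_0=4\int\partial_t N\,N\,|\partial_x u|^2$ as a single object. Substituting \eqref{defT} there makes $T$ appear multiplied by $N$ rather than by $\partial_x N$: one gets $\int\partial_t N\,N\,T=-\int\partial_x J\,N\,T$, and integrating by parts produces $\int J\,N\,\partial_x T$, which is where $\partial_x T=-\partial_t J$ finally has a target; the resulting term $-\int J\,N\,\partial_t J$ equals $\tfrac12 J_0$ modulo a total time derivative, while $\int J\,\partial_x N\,T\equiv-\tfrac12 N_1$ and the $\partial_x^2 N$ contribution from \eqref{defT} gives another $-\tfrac12 N_1$ via \eqref{K1=K2}. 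Collecting, $N_1+J_0\equiv\tfrac12 J_0-N_1$, i.e.\ $2N_1+\tfrac12 J_0\equiv 0$. Without \eqref{eleele} there is no non-circular way to bring the momentum conservation law into the estimate of $N_1$, so as written your plan cannot be completed.
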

\begin{proof}
First notice that by \eqref{contin} we write 
$$J_0=\int \partial_t N J^2=- \int  \partial_x J J^2
=0\,
$$
and hence we get the second equivalence in \eqref{tatifr}.

Next from \eqref{eleele}, using Lemma~\ref{cle} and the definition
of $T$ (see \eqref{defT}), we can write
\begin{eqnarray*}
N_1+ J_0 & = & \int \partial_t N (\partial_x N)^2+  \int \partial_t N J^2
\\
& = &\int\partial_t N N (T+\partial_{x}^{2} N-\frac 43 N^3)\\
& \equiv & - \int \partial_x J N T+\frac 12 \int \partial_t (N^2) \partial_x^2 N.
\end{eqnarray*}
where at the last step we have used 
$\int \partial_t N N^4=\frac 15 \frac d{dt} \int N^5=
\frac 15 \frac d{dt} \int |u|^{10} \in \tilde {\mathcal P}_{2, 9}$, that in turn follows 
by an argument similar to the one used along the proof of \eqref{gif}.
Recalling \eqref{K1=K2} we can continue
\begin{equation}\label{forum4}
N_1+ J_0\equiv - \int \partial_x J N T-\frac 12 N_1\,.
\end{equation}
Next, another use of  Lemma~\ref{cle} yields 
\begin{equation}\label{forum5}
 - \int \partial_x J N T= \int J\partial_x N T + \int J N  \partial_x T
 =
  \int J\partial_x N T - \int J N  \partial_t J\,.
\end{equation}
Coming back to the definition of $T$,  and using once more  Lemma~\ref{cle}  we get
\begin{align}\label{forum12}
  \int J\partial_x N T = -
\int J\partial_x N \partial_x^2 N
+4 \int J\partial_x N |\partial_x u|^2  + \frac 43 \int J\partial_x N N^3
\\\nonumber \equiv 
  -
\int J\partial_x N \partial_x^2 N
=
\frac 12 \int \partial_x J (\partial_x N)^2=-\frac{1}{2}N_1 \,
\end{align}
where we used $\int J\partial_x N |\partial_x u|^2, \int J\partial_x N N^3\in 
\tilde {\mathcal L}_{2, k_0}$ for a suitable $k_0$ (the proof is similar to the one of \eqref{yvette}).
Next, we have 
\begin{equation}\label{forum6}
- \int J N  \partial_t J=\frac{1}{2}\int \partial_t N J^2 
-\frac 12 \frac d{dt} \int NJ^2\equiv \frac{1}{2}\int \partial_t N J^2=\frac{1}{2} J_0\,,
\end{equation}
where we used
$\frac d{dt} \int NJ^2\in \tilde {\mathcal P}_{2,5}$
(see the proof of \eqref{gif}).
Therefore summarizing by \eqref{forum4}, \eqref{forum5}, \eqref{forum12} and
\eqref{forum6} we get
$$
N_1+ J_0\equiv \frac 12 J_0 -\frac{1}{2} N_1-\frac{1}{2}N_1
=\frac{1}{2}J_0-N_1
\,.
$$
This completes the proof of Lemma~\ref{cergy}.
\end{proof}
Therefore by combining \eqref{biagtibe} and Lemma \ref{cergy}
we get 
\begin{equation}\label{nifile}\frac d{dt} \|\partial_x^2 u\|_{L^2}^2\equiv 0.
\end{equation} Next notice that
$$
\big|\|(1-\partial_x^2)u\|_{L^2}^2-\|\partial_x^2 u\|_{L^2}^2\big|
\lesssim \|u\|_{H^1}^2
$$
and therefore it is easy to deduce 
$$\frac d{dt} \big(\|(1-\partial_x^2)u\|_{L^2}^2-\|\partial_x^2 u\|_{L^2}^2\big)\in 
\tilde{\mathcal P}_{2,1}.$$
By combining this fact with \eqref{nifile} we get $\frac d{dt} \|u\|_{H^2}^2\equiv 0$.
This completes the proof of 
Theorem~\ref{determinist} in the case  $k=1$, $M=\infty$. 
\begin{rem}\label{coloh}
{\rm 
The equivalence $\frac d{dt} \|u\|_{H^2}^2\equiv 0$ and the simple $L^4$ Strichartz estimate for the periodic Schr\"odinger equation imply 
that the solutions of \eqref{NLS} satisfy
$$
\frac{d}{dt}\|u(t)\|_{H^2}^2\leq C(\|u(0)\|_{H^1})\,.
$$
This estimate implies that the $H^2$ norm of the solutions of \eqref{NLS} are bounded by $C|t|^{1/2}$ for $t\gg 1$. 
This gives an alternative (and in our opinion simpler) proof of a result obtained in \cite{Bo3,CKO}, avoiding altogether  normal form and multilinear estimates techniques. We will address elsewhere higher order Sobolev norms, which may be handled similarly by refining our forthcoming analysis on higher order modified energies.}
\end{rem}
\subsection{Proof of Theorem~\ref{determinist} for $k\geq 2$, $M=\infty$}
We denote by $\mathcal L_{2k,k_0}$ and $\mathcal P_{2k,k_0}$ the 
sets of functionals defined as follows:
\begin{multline}\label{lstorto}
{\mathcal L}_{2k,k_0}:=\Big\{F(u(t,x))\,\, | \,\, u(t,x)\in C^\infty(\R\times \T),\,\,\, \,\\F: C^\infty(\T)\rightarrow \R, \,|F(u)|\lesssim \big(1+\|u\|_{H^{2k-1}}^{k_0}\big) \Big\}
\end{multline}
and
\begin{multline}\label{pstorto}
{\mathcal P}_{2k, k_0}:=\Big\{\frac d{dt} G(u(t,x))  \,\,| \,\,
u(t,x)\in C^\infty(\R\times \T),\,\,\, G: C^\infty(\T)\rightarrow \R, \,\,\\
G(0)=0,\,\,\,
|G(u)-G(v)|\lesssim \|u-v\|_{H^{2k-1}}\big(1+\|u\|_{H^{2k-1}}^{k_0} +\|v\|_{H^{2k-1}}^{k_0} \big)
\Big\}\,.
\end{multline}
It is worth mentioning that we have the following inclusions: 
$${\mathcal P}_{2j, k_0}\subseteq {\mathcal P}_{2k, k_0}, \quad
{\mathcal L}_{2j, k_0}
\subseteq
{\mathcal L}_{2k, k_0}, \quad j\leq k$$ and 
$${\mathcal P}_{2k, k_1}\subseteq {\mathcal P}_{2k, k_2}, \quad k_1\leq k_2.$$

We next introduce the following notation. Let $k$ be fixed, then
for a given couple of functionals $F_1, F_2:C^\infty(\T)\rightarrow \R$ we say
\begin{multline}\label{equivk2}
F_1(u(t,x))\equiv F_2(u(t,x)) \iff \\
F_1(u(t,x))-F_2(u(t,x))=F(u(t,x))+\frac d{dt} G(u(t,x)),
\\ \hbox{ where }
\quad F(u(t,x))\in {\mathcal L}_{2k,k_0}, \quad \frac d{dt} G(u(t,x))\in {\mathcal P}_{2k,k_0} 
\hbox{ with a suitable } \quad k_0\in\N.\end{multline}
\begin{rem}
{\rm
Notice that, for $k=1$, the functionals in
 \eqref{pstorto} define the same family as \eqref{P2}.
 However, the family defined in \eqref{lstorto} for $k=1$ and the one
 defined
in  \eqref{L2} differ slightly: notice that we have,
 by using the Sobolev embedding  the $H^\frac 12\subset L^4$,
the
 following inclusion
 \begin{multline*}\tilde {\mathcal L}_{2,k_0} 
\subseteq {\mathcal L}^\epsilon_{2,k_0+4}:=  \Big\{F(u(t,x))\,\, | \,\, u(t,x)\in C^\infty(\R\times \T),\,\,\, \,\\F: C^\infty(\T)\rightarrow \R,\,\,   \,\,|F(u)|\lesssim \big(1+\|u\|_{H^{\frac 32 -\epsilon}}^{4+k_0}\big) \Big\}
\end{multline*} for a suitable $\epsilon>0$.
On the other hand we have the following trivial inclusion
\begin{multline*}{\mathcal L}_{2k,k_0} 
\subseteq {\mathcal L}^\epsilon_{2k,k_0}:=  \Big\{ F(u(t,x))\,\, | \,\, u(t,x)\in C^\infty(\R\times \T),\,\,\, \,\\F: C^\infty(\T)\rightarrow \R, \,\,  \,\,|F(u)|\lesssim \big(1+\|u\|_{H^{2k-\frac 12 -\epsilon}}^{k_0}\big) \Big\}
\end{multline*}
for a suitable $\epsilon>0$. It is worth mentioning that in order to get Theorem \ref{th2}
from Theorem \ref{determinist} it is sufficient  to show that the functionals
$F_{2k}^{(M)}(u)$ belong to ${\mathcal L}^\epsilon_{2k,m_0}$ (defined above) for a suitable $\epsilon>0$ and for some $m_0\in \N$ (see \eqref{NEW_pak_again}). Hence we
could work in a unified framework by using the class 
${\mathcal L}^\epsilon_{2k,k_0}$
for every $k\geq 1$ (without any distinction between $k=1$ and $k>1$).
However our motivation to deal separately with the case $k=1$ and $k\geq 2$ 
along the proof of Theorem \ref{determinist} is twofold:
on one hand the proof for $k=1$ is less involved, compared with the case $k\geq 2$, and hence it is very useful to get the basic ideas behind our argument; on the other hand, 
the smoothing estimate \eqref{NEW_pak_2} implies a strong information on the growth 
of the $H^2$ norm (see remark \ref{coloh}) that in principle we cannot get if we replace
on the r.h.s. in \eqref{NEW_pak_2} the norm $\|\partial_x u\|_{L^4}$ by the stronger norm $\|u\|_{H^{\frac 32 -\epsilon}}$
for $\epsilon>0$ small.}


\end{rem}

In the sequel we introduce some specific families of functionals $F(u)$ 
belonging to ${\mathcal L}_{2k,k_0}$, that will play a crucial role.
The first interesting family is the following one:
\begin{equation}\label{family1}
\Big \{F(u)=\int \prod_{i=1}^{k_0} \partial_x^{\alpha_i} v_i \,\, |  \,\, v_i\in\{u, \bar u\}, \hbox{ }\sum_{i=1}^{k_0}\alpha_i=4k, \hbox{ } {\mathcal Card} \{\alpha_i>1\}\geq 3\Big \}\subseteq {\mathcal L}_{2k, k_0}.
\end{equation}
We shall need the following class as well
\begin{equation}\label{family2}
\Big \{F(u)=\int \prod_{i=1}^{k_0} \partial_x^{\alpha_i} v_i \,\, |  \,\, v_i\in\{u, \bar u\}, 
\hbox{ } \sum_{i=1}^{k_0}\alpha_i=4k, \hbox{ } {\mathcal Card} \{\alpha_i>0\}\geq 4\Big \}\subseteq {\mathcal L}_{2k, k_0},
\end{equation}
and finally 
\begin{equation}\label{family3}
\Big \{F(u)=\int \prod_{i=1}^{k_0} \partial_x^{\alpha_i} v_i \,\, |  \,\, v_i\in\{u, \bar u\}, \hbox{ } \sum_{i=1}^{k_0}\alpha_i\leq 4k-2\Big \}\subseteq {\mathcal L}_{2k, k_0}.
\end{equation}
The proof of the inclusions \eqref{family1}, \eqref{family2}, \eqref{family3} are similar. 
For instance let us sketch the proof of \eqref{family1}.
It is not restrictive to assume
$\alpha_i\geq \alpha_{i+1}$. Then we have two possibilities: 
\begin{itemize}
\item ${\alpha_i}\leq 2k-1, \quad \forall i=1,\dots , k_0$;
\item $\alpha_1\geq 2k$
\end{itemize}
Notice that in the first case, by the constraint $\sum_{i=1}^{k_0}\alpha_i=4k 
$ and ${\mathcal Card} \{\alpha_i>1\}\geq 3$ we have that necessarily $
\max\{\alpha_3,\dots , \alpha_{k_0}\}=\alpha_3\leq 2k-2$. Hence
by combining the H\"older inequality and the Sobolev embedding $H^1\subset L^\infty$ we get
\begin{multline}\label{family1proof}
|\int \prod_{i=1}^{k_0} \partial_x^{\alpha_i} v_i|\leq \|\partial_x^{\alpha_1} v_1\|_{L^2}
\|\partial_x^{\alpha_2} v_2\|_{L^2} \prod_{i=3}^{k_0} \|\partial_x^{\alpha_i} v_i\|_{L^\infty}
\\
\lesssim \|v_1\|_{H^{2k-1}}
\|v_2\|_{H^{2k-1}} \prod_{i=3}^{k_0} \|\partial_x^{\alpha_i} v_i\|_{H^{1}}
\\\lesssim \|v_1\|_{H^{2k-1}}
\|v_2\|_{H^{2k-1}} \prod_{i=3}^{k_0} \|v_i\|_{H^{\alpha_i+1}}
\lesssim\|u\|_{H^{2k-1}}^{k_0}.
\end{multline}
In the second case, namely $\alpha_1\geq 2k$, notice that by the constraint
$\sum_{i=1}^{k_0}\alpha_i=4k$ and ${\mathcal Card} \{\alpha_i>1\}\geq 3$
we have necessarily $\alpha_2, \alpha_3\geq 2$ and $\max\{\alpha_2, \dots, \alpha_{k_0}\}=\alpha_2\leq 2k-2$. Next by integration by parts
(that we can apply $\alpha_1-2k$ times), we reduce 
$\int \prod_{i=1}^{k_0} \partial_x^{\alpha_i} v_i $ to a linear combination of expressions 
of the following type:
\begin{equation}\label{biagjes}\int \prod_{i=1}^{k_0} \partial_x^{\beta_i} v_i\end{equation}
where again we can assume $\beta_i\geq \beta_{i+1}$ and $\beta_1=2k$.
Hence by recalling the conditions on $\alpha_i$, necessarily we have
$\sum_{i=1}^{k_0} \beta_i=4k$, $\beta_2, \beta_3\geq 2$ and hence
$\max\{\beta_2,\dots , \beta_{k_0}\}=\beta_2\leq 2k-2$.
By one last integration by parts (that moves one derivative from $\partial_x^{\beta_1}v_1$
on the other factors in \eqref{biagjes}) we can conclude by combining H\"older inequality and the Sobolev embedding $H^1\subset L^\infty$, in the same spirit
as in \eqref{family1proof}.
\\
\\

Now we can focus on the proof of Theorem \ref{determinist} for $k\geq 2$, $M=\infty$.
Notice that using the notation $\equiv$ we introduced in \eqref{equivk2}, it is 
equivalent to prove that $\frac d{dt} \|u\|_{H^{2k}}^2\equiv 0$.
In fact it is sufficient to show
\begin{equation}\label{homogen}\frac d{dt} \|\partial_x^{2k}u\|_{L^2}^2\equiv 0,\end{equation}
then the stronger conclusion $\frac d{dt} \|u\|_{H^{2k}}^2\equiv 0$ follows 
by an argument similar to the one used at the end of subsection \ref{k=1} (above remark \ref{coloh}).

Let $u(t,x)$ be a smooth solution of \eqref{NLS}.
Then by using the equation solved by $u$ we get:
\begin{align}\label{first}
\frac d{dt} \|\partial_x^{2k} u\|_{L^2}^2&=2 \Re (\partial_t \partial_x^{2k} u, 
\partial_x^{2k} u)\\\nonumber &=2 
\Re \big(-i \partial_t^2 \partial_x^{2k-2} u+ \partial_t \partial_x^{2k-2} (u|u|^4), -i \partial_t \partial_x^{2k-2} u +\partial_x^{2k-2} (u|u|^4)\big)
\\\nonumber
&\equiv 2 \Re (-i \partial_t^2 \partial_x^{2k-2} u, -i \partial_t \partial_x^{2k-2} u)=
\frac d{dt} \|\partial_t \partial_x^{2k-2} u\|_{L^2}^2,
\end{align}
where the equivalence $\equiv$ follows by 
\begin{equation}\label{albfin1}
2 \Re (-i \partial_t^2 \partial_x^{2k-2} u, \partial_x^{2k-2} (u|u|^4)) + 2 \Re (
\partial_t \partial_x^{2k-2} (u|u|^4), -i \partial_t \partial_x^{2k-2} u)\in {\mathcal P}_{2k, k_0}
\end{equation}
for a suitable $k_0$ 
and 
\begin{equation}\label{albfin}
2 \Re (\partial_t \partial_x^{2k-2} (u|u|^4)), \partial_x^{2k-2} (u|u|^4))
\in {\mathcal P}_{2k,9}\,.
\end{equation}
The proof of \eqref{albfin} follows by the following computation
$$2 \Re (\partial_t \partial_x^{2k-2} (u|u|^4)), \partial_x^{2k-2} (u|u|^4))
=\frac d{dt}  \int |\partial_x^{2k-2} (u|u|^4)|^2,$$
hence we have to show
\begin{equation}\label{tarsi1}\frac d{dt}  \int |\partial_x^{2k-2} (u|u|^4)|^2
\in {\mathcal P}_{2k,9}.\end{equation}
The main point is to check the property that describes
${\mathcal P}_{2k,9}$ (see \eqref{pstorto}) for $v=0$, which in turn reduces
to proving that $ \int |\partial_x^{2k-2} (u|u|^4)|^2 \in {\mathcal L}_{2k,10}$.
Indeed once this is proved then it is clear that the same argument, 
with minor changes, allows to deal with the full proof of \eqref{albfin}
including the case the case $v\neq 0$ in the definition \eqref{pstorto}.
Next in order to conclude notice that $\int |\partial_x^{2k-2} (u|u|^4)|^2$
belongs to the family \eqref{family3} for $k_0=10$ and hence we get
\eqref{tarsi1}.
Concerning the proof of \eqref{albfin1} we have
\begin{multline*}
2 \Re (-i \partial_t^2 \partial_x^{2k-2} u, \partial_x^{2k-2} (u|u|^4)) + 2 \Re (
\partial_t \partial_x^{2k-2} (u|u|^4), -i \partial_t \partial_x^{2k-2} u)
\\= 2 \Im (\partial_t^2 \partial_x^{2k-2} u, \partial_x^{2k-2} (u|u|^4)) - 2 \Im (
\partial_t \partial_x^{2k-2} (u|u|^4), \partial_t \partial_x^{2k-2} u) 
\\\,\,\,\,\,\,\,\,=2 \frac d{dt} \Im (\partial_t \partial_x^{2k-2} u, \partial_x^{2k-2} (u|u|^4))
- 2  \Im (\partial_t \partial_x^{2k-2} u, \partial_t \partial_x^{2k-2} (u|u|^4)) 
\\{}- 2 \Im (
\partial_t \partial_x^{2k-2} (u|u|^4)), \partial_t \partial_x^{2k-2} u)\\
 =
2 \frac d{dt} \Im (\partial_t \partial_x^{2k-2} u, \partial_x^{2k-2} (u|u|^4)),
\end{multline*}
hence we get \eqref{albfin1} provided that we show
\begin{equation}\label{tarsi}
\frac d{dt} \Im (\partial_t \partial_x^{2k-2} u, \partial_x^{2k-2} (u|u|^4))
\in {\mathcal P}_{2k,k_0}\end{equation}
for a suitable $k_0$. Arguing as in the proof of \eqref{tarsi1},
it is sufficient to show
\begin{equation}\label{lastissimo}
\Im (\partial_t \partial_x^{2k-2} u, \partial_x^{2k-2} (u|u|^4))
\in {\mathcal L}_{2k,k_0+1}\end{equation} for a suitable $k_0$.
By using the equation solved by $u$ we can replace 
$\partial_t u$ by $i(\partial_x^2 u - u|u|^4)$.
Let us consider the worse case when we replace 
$\partial_t u$ by $i\partial_x^2 u$ (this is the most delicate case since we get the highest amount of derivatives). Using Leibniz rule to expand space derivatives
we get that $\Im (i\partial_x^{2k} u, \partial_x^{2k-2} (u|u|^4))$
can be written as a linear combination of terms of the following type
\begin{equation*}
\int \partial_x^{j_1} v_1\partial_x^{j_2} v_2 \partial_x^{j_3} 
v_3\partial_x^{j_4} v_4 \partial_x^{j_5} v_5 \partial_x^{j_6}v_6\end{equation*}
where $v_i\in \{u, \bar u\}$, $j_1+j_2+j_3+j_4+j_5+j_6=4k-2$ and hence the terms above belong to the family \eqref{family3} for $k_0=6$. As a consequence of this fact, and by noticing that the remaining (lower order) contributions that we get when we replace $\partial_t u$ by $-iu|u|^4$, can be treated in a similar way, 
we get \eqref{lastissimo}.

By iterating $k-1$ times the argument used to get \eqref{first}  (that allows to replace two space derivatives by one time derivative) we can deduce
\begin{equation}\label{guido}\frac d{dt} \|\partial_x^{2k} u\|_{L^2}^2
\equiv 
\frac d{dt} \|\partial_t^{k} u\|_{L^2}^2.\end{equation}
Then we can continue as follows by using the equation solved by $u(t,x)$ and the Leibniz rule:
\begin{align}\label{degi}\frac d{dt} \|i\partial_t^k u\|_{L^2}=2 \Re (i\partial_t^{k+1} u, i\partial_t^k u)
\\\nonumber =2 \Re (\partial_t^k (-\partial_x^2 u + u|u|^{4}) , i\partial_t^k u)
=2 \Re (\partial_t^k (u|u|^{4}) , i\partial_t^k u)\\\nonumber
=2 \Re (\partial_t^k u|u|^{4} , i\partial_t^k u)+2 \Re (u \partial_t^k (|u|^{4}) , i\partial_t^k u)
+\sum_{j=1}^{k-1} c_j \Re (\partial_t^j u \partial_t^{k-j} (|u|^{4}) , i\partial_t^k u)\\
=\underbrace{2 \Re (u \partial_t^k (|u|^{4}) , i\partial_t^k u)}_{I}
+\underbrace{\sum_{j=1}^{k-1} c_j \Re (\partial_t^j u \partial_t^{k-j} (|u|^{4}) , i\partial_t^k u)}_{II}
\end{align}
for suitable coefficients $c_j$.
\\

First we claim that $II\equiv 0$.  
More precisely we shall prove 
\begin{equation}\label{impoterm}
(\partial_t^j u \partial_t^{k-j} (|u|^{4}) , i\partial_t^k u)
\in {\mathcal L}_{2k, k_0}, \quad \forall j=1,\dots , k-1\end{equation}
for a suitable $k_0$.
Notice that when we expand the time derivative by Leibnitz rule,
we can write
$
(\partial_t^j u \partial_t^{k-j} (|u|^{4}) , i\partial_t^k u)
$
as a linear combination of terms of the following type:
\begin{equation}\label{bucc}
\int \partial_t^{j_1} v_1\partial_t^{j_2} v_2 \partial_t^{j_3} 
v_3\partial_t^{j_4} v_4 \partial_t^{j_5} v_5 \partial_t^{j_6}v_6\end{equation}
where $v_j\in \{u, \bar u\}$ and we can assume
\begin{equation}\label{glfiromfi} j_1+j_2+j_3+j_4+j_5+j_6=2k,
\quad j_i\geq j_{i+1}, \quad j_1, j_2, j_3\geq 1 .\end{equation}
By using the equation solved by $u$ we can replace (in \eqref{bucc})
$\partial_t v_i$ by $\pm i(\partial_x^2 v_i - v_i|v_i|^4)$.
Let us consider the worse case where we replace 
$\partial_t v_i$ by $\pm i\partial_x^2 v_i$ (this is the most delicate case since we get the higher
amount of derivatives) and we get
from \eqref{bucc} the following contribution
\begin{equation}\label{bucccontrmain}
\int \partial_x^{2j_1} v_1\partial_x^{2j_2} v_2 \partial_x^{2j_3} 
v_3\partial_x^{2j_4} v_4 \partial_x^{2j_5} v_5 \partial_x^{2j_6}v_6\end{equation}
where 
$$2j_1+2j_2+2j_3+2j_4+2j_5+2j_6=4k, \quad 2j_1, 2j_2, 2j_3\geq 2.$$
Then the typical expression \eqref{bucccontrmain} fits in the family 
\eqref{family1} and hence we conclude that the expression in \eqref{bucccontrmain}
belongs to ${\mathcal L}_{2k,6}$.
Notice that in the case that we replace in \eqref{bucc} at least once
$\partial_t v_i$ by the nonlinear term $v_i|v_i|^4$ (and not by $\partial_x^2 v_i$ as we did above) then we get a multilinear expression of derivatives of $v_i$
(whose homogeneity is higher than six) where the amount of derivatives involved
is less or equal that $4k-2$. Then the terms that we get in this case fit with the family of functionals described by \eqref{family3} and hence belonging to 
${\mathcal L}_{2k,k_0}$. By considering all the possible contributions, we get \eqref{impoterm}.
\\

Concerning the term $I$ in \eqref{degi} we get:
\begin{multline}\label{tzle}
I=2 \Re (u \partial_t^k (|u|^4) , i\partial_t^k u) =
2 \frac d{dt} \Re (u \partial_t^k (|u|^4) , i\partial_t^{k-1} u)
\\
- 2 \Re (\partial_t u \partial_t^k (|u|^4) , i\partial_t^{k-1}  u) - 2 \Re (u \partial_t^{k+1} (|u|^4) , i\partial_t^{k-1} u)\,.
\end{multline}
Arguing as along the proof of \eqref{tarsi} and \eqref{impoterm}, one can prove
$$\frac d{dt} \Re (u \partial_t^k (|u|^4) , i\partial_t^{k-1} u)
\in {\mathcal P}_{2k,k_0}, \quad \Re (\partial_t u \partial_t^k (|u|^4) , i\partial_t^{k-1}  u)\in {\mathcal L}_{2k,k_0}$$
for a suitable $k_0$.
Hence we can continue \eqref{tzle} as follows
\begin{equation}\label{tzbe}I\equiv - 2 \Re (u \partial_t^{k+1} (|u|^4) , i\partial_t^{k-1} u).\end{equation}
Next, notice that
\begin{equation}\label{implor}
\Re (\partial_t u \partial_t^{k+1} (|u|^4), i\partial_t^{k-2} u) \in {\mathcal L}_{2k,k_0},
\quad \hbox{ if } k\geq3
\end{equation}
for a suitable $k_0$,
whose proof is similar to the one of \eqref{impoterm}.
\\
We claim that
\begin{equation}\label{tinavita}I\equiv 2 (- 1)^{k-1}  \Re (u \partial_t^{2k-1} (|u|^4) , i\partial_t u).
\end{equation}
In fact in the case $k=2$ it follows by \eqref{tzbe}. In the general case $k\geq 3$ we argue as follows.
By combining \eqref{tzbe} and \eqref{implor} (that works for $k\geq 3$) we get
\begin{multline}\label{tzbenew}I\equiv - 2 \Re (u \partial_t^{k+1} (|u|^4) , i\partial_t^{k-1} u) \\
=
- 2 \frac d{dt} \Re (u \partial_t^{k+1} (|u|^4) , i\partial_t^{k-2} u) 
+2 \Re (u \partial_t^{k+2} (|u|^4) , i\partial_t^{k-2} u)\\
\equiv 2 \Re (u \partial_t^{k+2} (|u|^4) , i\partial_t^{k-2} u).
\end{multline}
Notice that at the last step we have used 
\begin{equation*}
\frac d{dt} \Re (u \partial_t^{k+1} (|u|^4) , i\partial_t^{k-2} u)\in {\mathcal P}_{2k,k_0}
\end{equation*} 
for a suitable $k_0$, whose proof is similar to the one of \eqref{tarsi}.
 By iterating $k-2$ times the argument we used from \eqref{tzbe} to \eqref{tzbenew}
(namely to move one time derivatives from the right factor to the left factor),  
we deduce \eqref{tinavita}
in the general case $k\geq 3$.
\\
 \begin{rem}{\rm 
Notice that the expression on the r.h.s. in \eqref{tinavita} may no longer be handled in a simple way, by
developing time derivatives by Leibniz rule and by replacing time derivatives with space derivatives (using the equation). In fact, following this direction, we could get at least one term where $4k$ space derivatives
are shared on two factors only, and hence we do not fit with the functionals in
\eqref{family1}, \eqref{family2}, \eqref{family3}. 
 In fact a finer analysis is needed to 
deal with the term
$
\Re (u \partial_t^{2k-1} (|u|^4) , i\partial_t u).
$
The key point is that, as we shall see below,
the expression $\Re (u \partial_t^{2k-1} (|u|^4) , i\partial_t u)$ has the advantage to fit in the analysis involving the same quantities $N$, $J$ and $T$ performed in the case $k=1$
(see subsection \ref{k=1}).}
\end{rem}
Notice that 
\begin{equation}\label{partpal}
 (u \partial_t^{2k-1} (|u|^4) , |u|^4 u)\in {\mathcal L}_{2k, k_0}
 \end{equation}
 for a suitable $k_0$,
 whose proof follows the same argument to get \eqref{impoterm}, namely 
 first expand time derivatives through Leibniz rule and then use the equation solved by $u(t,x)$ to substitute time derivatives for space derivatives. We then  easily deduce that in this way we 
 get multilinear expressions that fit in \eqref{family3} (in fact in the expression \eqref{partpal}
 are involved $2k-1$ time derivatives, and hence at most $4k-2$ space derivatives).
\\
By combining \eqref{tinavita}, \eqref{partpal} and using the equation solved by $u(t,x)$ we get
\begin{multline}\label{aklim}
I\equiv 2 (- 1)^{k-2}   \Re (u \partial_t^{2k-1} (|u|^4) , \partial_x^2 u)
\\=  (- 1)^{k-2} (\partial_{x}^2 (|u|^2), \partial_t^{2k-1} (|u|^4)) -2 (- 1)^{k-2}(|\partial_{x} u|^2, \partial_t^{2k-1} (|u|^4)) 
.\end{multline}
We claim that we have
\begin{equation}\label{aklimstar}
I\equiv (- 1)^{k-1} N_0 +(- 1)^{k-1}J_0 +(- 1)^{k-1} N_1
\end{equation}
where:
\begin{align}\label{N0,N1,J0}
N_0:=\int \partial_t^{2k-1} \partial_x^2 N N^2, 
\quad 
N_1:=\int \partial_t^{2k-1} N (\partial_x N)^2,
\quad 
J_0:=\int \partial_t^{2k-1} N J^2,
\end{align}
($N$, $J$
are defined in \eqref{defN}, \eqref{defJ}).
\\ 
Notice that the first term
on the right hand-side of \eqref{aklim}  is equivalent to $N_0$, up to a multiplicative factor.
Indeed we have
\begin{multline*}
(\partial_{x}^2 (|u|^2), \partial_t^{2k-1} (|u|^4))=\int  \partial_t^{2k-1} (N^2)\partial_{x}^2 N
\\
=\frac d{dt} \int  \partial_t^{2k-2} (N^2)\partial_{x}^2 N-
\int  \partial_t^{2k-2} (N^2) \partial_t \partial_{x}^2  N
\end{multline*}
and by noticing
that $\frac d{dt} \int  \partial_t^{2k-2} (N^2)\partial_{x}^2 N \in {\mathcal P}_{2k, k_0}$ 
(see the proof of \eqref{tarsi}) we get
$$(\partial_{x}^2 (|u|^2), \partial_t^{2k-1} (|u|^4))\equiv -
\int  \partial_t^{2k-2} (N^2) \partial_t\partial_{x}^2  N.$$
We can repeat the argument above to move another time derivative on the second factor, namely 
$$(\partial_{x}^2 (|u|^2), \partial_t^{2k-1} (|u|^4))\equiv \int  \partial_t^{2k-3} (N^2)
\partial_t^2 \partial_{x}^2 N$$
and hence by iteration of this argument $2k-3$ times more we get
$$(\partial_{x}^2 (|u|^2), \partial_t^{2k-1} (|u|^4))\equiv - N_0,$$
where $N_0$ is defined in \eqref{N0,N1,J0}.
\\
The second term (modulo a multiplicative factor) on the r.h.s. of \eqref{aklim}
is given by 
\begin{align}
\label{qufipi}
(|\partial_{x} u|^2, \partial_t^{2k-1} (|u|^4)) &= (|\partial_{x} u|^2, \partial_t^{2k-1} (N^2))\\
& = 2 
(|\partial_{x} u|^2, N\partial_t^{2k-1} N) 
+\sum_{j=1}^{2k-2} c_j 
(|\partial_{x} u|^2, \partial_t^{2k-1-j} N \partial_t^{j}N).\nonumber
\end{align}
By following the same argument as in the proof of \eqref{impoterm}
it is easy to show that $(|\partial_{x} u|^2, \partial_t^{2k-1-j} N \partial_t^{j}N)\equiv 0$
(in fact by recalling $N=|u|^2$, by using the Leibniz rule to develop time derivatives and by replacing time derivatives with space derivatives due to the equation solved by $u(t,x)$, we get a linear combination of terms belonging to \eqref{family2}).
Going back to \eqref{qufipi} we get
\begin{multline*}
(|\partial_{x} u|^2, \partial_t^{2k-1} (|u|^4))\equiv
2 (|\partial_{x} u|^2, N \partial_t^{2k-1} N)\\
=\frac 12 \int  (J^2+ (\partial_x N)^2)
\partial_t^{2k-1} N=\frac 12 J_0 + \frac 12 N_1,
\end{multline*}
where we used the identity \eqref{eleele} (see \eqref{N0,N1,J0} for the definition of $J_0, N_1$). Hence the proof of \eqref{aklimstar} is complete.
 \\
 Going back to \eqref{guido}, \eqref{degi}, by recalling that $II\equiv 0$
 and due to \eqref{aklimstar}, we may conclude \eqref{homogen}, provided that we prove the following statement. 
\begin{lemme}\label{fin}
We have the following relations:
\begin{align}\label{mbmb}
J_0+\frac{3}{2}N_1\equiv  -\frac{1}{2} N_0,
\quad \quad 
&
J_0\equiv 0\,, \quad \quad
N_0\equiv N_1,
\quad \quad
\end{align}
In particular $ N_0\equiv N_1\equiv
J_0 \equiv 0.$
\end{lemme}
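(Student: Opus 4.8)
The plan is to reproduce, for the three quantities $N_0,N_1,J_0$ — each now carrying the factor $\partial_t^{2k-1}$ — the computation used in the case $k=1$ (Lemma~\ref{cergy} and the identity~\eqref{K1=K2}), the sole new feature being that every manipulation drags along $2k-2$ ``spectator'' time derivatives. These are shuffled by repeated integration by parts in time; each such step produces a term $\frac{d}{dt}G$ with $G$ of strictly lower differential order, and after rebalancing the space derivatives of $G$ by integration by parts in $x$ (so that no factor carries more than $2k-1$ of them) one checks $|G(u)-G(v)|\lesssim\|u-v\|_{H^{2k-1}}\big(1+\|u\|_{H^{2k-1}}^{k_0}+\|v\|_{H^{2k-1}}^{k_0}\big)$, i.e.\ $\frac{d}{dt}G\in\mathcal P_{2k,k_0}$, so it may be discarded. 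What makes the shuffling harmless is, first, the density-level conservation laws of Lemma~\ref{cle}, which let one trade a $\partial_t$ for a $\partial_x$ on $N$ (through $\partial_tN=-\partial_xJ$) or on $J$ (through $\partial_tJ=-\partial_xT$), thereby spreading the top derivatives rather than concentrating them; and, second, the purely quadratic structure of $N=|u|^2$, $J=2\Im(\bar u\partial_xu)$ and $\partial_xJ=2\Im(\bar u\partial_x^2u)$ (the $|\partial_xu|^2$ contributions to $\partial_xJ$ cancel), so that a block $\partial_t^aJ$ with $a\ge1$, once the equation is substituted, always exhibits one factor carrying at least two derivatives; hence a product of three such blocks has at least three factors of order $>1$ and lies in~\eqref{family1}, its higher-degree pieces having strictly smaller total order and lying in~\eqref{family3}.

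For $J_0\equiv0$ — the analogue of $J_0=\int\partial_tN\,J^2=-\int\partial_xJ\,J^2=0$ — I would argue in two steps. First, set $B:=\int(\partial_x\partial_t^{k-1}J)\,J\,\partial_t^{k-1}J$; integrating by parts in $x$ to move the outer $\partial_x$ gives $2B=-\int\partial_xJ\,(\partial_t^{k-1}J)^2$, whose minimal-degree part is of class~\eqref{family1} (the rest being of class~\eqref{family3}) by the structural remark above, so $B\equiv0$. Second, balance the time derivatives of $J_0=\int\partial_t^{2k-1}N\,J^2$ by integrating by parts $k-1$ times in time, use $\partial_t^kN=-\partial_x\partial_t^{k-1}J$, and integrate by parts in $x$; expanding $\partial_t^{k-1}(J^2)$ by Leibniz yields $J_0\equiv\pm\sum_{a+b=k-1}\binom{k-1}{a}\int(\partial_x\partial_t^{k-1}J)\,\partial_t^aJ\,\partial_t^bJ$, in which the interior terms ($a\ge1$, $b\ge1$) are of class~\eqref{family1}--\eqref{family3} and the two boundary terms ($a=0$, $b=0$) are copies of $B$. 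Hence $J_0\equiv0$.

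The relation $N_0\equiv N_1$ is the $\partial_t^{2k-1}$-analogue of~\eqref{K1=K2}: compute $\int\partial_x^2N\cdot\partial_t^{2k-1}(N^2)$ in two ways — moving all the time derivatives onto $\partial_x^2N$ yields $-N_0$ modulo $\mathcal P_{2k,k_0}$, while integrating by parts in $x$ and then repeatedly in $t$ (using $\partial_tN=-\partial_xJ$ to turn the resulting derivatives into the blocks $\partial_t^aJ$, discarding the \eqref{family1}--\eqref{family3} remainders together with the ``$B\equiv0$''-type self-symmetric terms) yields $-N_1$. For the remaining relation one follows~\eqref{forum4}--\eqref{forum12}: from $N_1+J_0=\int\partial_t^{2k-1}N\,(J^2+(\partial_xN)^2)$, substitute~\eqref{eleele} and the definition~\eqref{defT} of $T$ to get $N_1+J_0=\int\partial_t^{2k-1}N\cdot N\big(T+\partial_x^2N-\tfrac43N^3\big)$; the $N^4$-term is $\tfrac15\partial_t^{2k-1}(N^5)$ modulo cross terms of total order $\le4k-2$, hence $\equiv0$; the $\partial_x^2N$-term is treated exactly as in the proof of $N_0\equiv N_1$; and the $NT$-term is reduced, via $\partial_t^{2k-1}N=-\partial_x\partial_t^{2k-2}J$ and $\partial_xT=-\partial_tJ$, to $\int\partial_t^{2k-2}J\,\partial_xN\,T-\int\partial_t^{2k-2}J\,N\,\partial_tJ$, whose two pieces contribute (expanding $T$ and discarding the \eqref{family1}--\eqref{family3} terms, as in~\eqref{forum12} and~\eqref{forum6}) $-\tfrac12N_1$ and, using $J_0\equiv0$ and $B\equiv0$, $\tfrac12J_0$. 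Collecting the identities gives the relation~\eqref{mbmb}, and combining it with $J_0\equiv0$ and $N_0\equiv N_1$ forces $N_0\equiv N_1\equiv J_0\equiv0$.

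The hard part is none of the individual algebraic identities but the bookkeeping underlying ``$\equiv$'' throughout: one must check that at each step, after substituting the equation, every monomial which is not a genuine total derivative either distributes its $4k$ top space derivatives over at least three factors (so it is of class~\eqref{family1}, or~\eqref{family2} in the borderline case where two blocks collapse) or has total order at most $4k-2$ (class~\eqref{family3}) — in other words, that one is never trapped with a genuine $\|\partial_x^{2k}u\|_{L^2}^2$-type contribution — and that every $\frac{d}{dt}G$ created along the way has $G$ Lipschitz on $H^{2k-1}$. In the case $k=1$ this bookkeeping is empty, which is why Lemma~\ref{cergy} is so short; for $k\ge2$ it is the whole content of the lemma.
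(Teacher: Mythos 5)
You follow the paper's strategy in all essentials: the same three quantities, the same linear system, and the same engine (trading $\partial_t$ for $\partial_x$ via Lemma~\ref{cle}, balancing derivatives by integrations by parts in $t$ and $x$, and discarding remainders through the classes \eqref{family1}--\eqref{family3} and ${\mathcal P}_{2k,k_0}$). Two sub-steps are routed differently. For $J_0\equiv 0$ the paper first converts \emph{all} time derivatives into space derivatives, arriving at $J_0\equiv(-1)^k\int\partial_x^{4k-3}J\,J^2$ (see \eqref{diffbp}), and then integrates by parts in $x$ until the antisymmetry $J_0\equiv-J_0$ appears; your alternative --- balance the time derivatives to $\partial_t^{k-1}$ on each copy of $J$ and kill the boundary term $B$ by a single integration by parts in $x$ --- is shorter and sound, since $2B=-\int\partial_xJ\,(\partial_t^{k-1}J)^2$ does land in \eqref{family1} by your structural observation on the blocks $\partial_t^aJ$ (which is exactly the observation the paper uses implicitly throughout). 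For $N_0\equiv N_1$ the paper manipulates $N_0$ directly (\eqref{tizio} onward) rather than computing $\int\partial_x^2N\,\partial_t^{2k-1}(N^2)$ in two ways; your first way, giving $-N_0$, coincides with the paper's \eqref{Kogen}.

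Your second way is the one place I would push back. After one integration by parts in $x$ you face $-2\int\partial_xN\,\partial_t^{2k-1}(N\,\partial_xN)$, whose extreme Leibniz terms are $-2\int(\partial_xN)^2\,\partial_t^{2k-1}N=-2N_1$ and $-2\int N\,\partial_xN\,\partial_t^{2k-1}\partial_xN$. The latter is \emph{not} a negligible ``$B\equiv0$-type'' self-symmetric term, nor does time-balancing dispose of it (its Leibniz piece $\int\partial_t^kN\,\partial_xN\,\partial_t^{k-1}\partial_xN$ carries $4k$ space derivatives with only two factors guaranteed to hold at least two of them, so it escapes \eqref{family1}--\eqref{family3}); it equals $-\int\partial_x(N^2)\,\partial_t^{2k-1}\partial_xN=+N_0$ exactly, and only with this identification do the two computations combine into $-N_0\equiv N_0-2N_1$, i.e.\ $N_0\equiv N_1$. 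Discarding it, as your sketch suggests, yields $N_0\equiv 2N_1$ instead; the final conclusion $N_0\equiv N_1\equiv J_0\equiv0$ would survive because the linear system stays nondegenerate, but the stated relation would not be what you proved. Beyond this, your proposal remains a plan rather than a proof: the ``bookkeeping'' you defer --- verifying term by term that every remainder falls into \eqref{family1}--\eqref{family3} or ${\mathcal P}_{2k,k_0}$ after substituting the equation --- is precisely what occupies the bulk of the paper's argument (e.g.\ \eqref{imsantub}, \eqref{anne}, \eqref{mathann}, \eqref{claimlast}).
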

\begin{proof}
We focus on the first identity in \eqref{mbmb}:
\begin{align}\label{sinceev}
J_0+N_1 & = \int \partial_t^{2k-1} N J^2 + \int \partial_t^{2k-1} N (\partial_x N)^2
\\\nonumber
& =4 \int \partial_t^{2k-1} N N |\partial_x u|^2 \equiv \int  \partial_t^{2k-1}  N N T
+  \int \partial_t^{2k-1} N\, N\, \partial^2_x N,
\end{align}
where we have used the definition of $T$ (see \eqref{defT}), \eqref{eleele} and we have neglected the 
contribution to $T$ given by  $-\frac 43 N^3$.
More specifically we have used the fact that $\int  \partial_t^{2k-1}  N N^4\in {\mathcal L}_{2k, k_0}$ for a suitable $k_0$,
whose proof is similar to the proof of \eqref{partpal}. 
\\
Next 
by \eqref{contin}
we can write
\begin{eqnarray}\label{complale}
 \int  \partial_t^{2k-1} N\, N\, T & = & -\int  \partial_t^{2k-2} \partial_x J \, N \,  T
= 
\int  \partial_t^{2k-3} \partial_x^2 T \, N\, T
 \\\nonumber
 & = &
 -\int  \partial_t^{2k-3} \partial_x T \, \partial_x N\,  T -\int  \partial_t^{2k-3} \partial_x T\,  N\,  \partial_x T.
\end{eqnarray}
We first notice that 
\begin{equation}\label{guidalb}
\int  \partial_t^{2k-3} \partial_x T\, N \, \partial_x T\equiv 0.
\end{equation}
In fact we have
\begin{multline}\label{threeter}\int  \partial_t^{2k-3} \partial_x T\, N \, \partial_x T=\frac d{dt} 
\int  \partial_t^{2k-4} \partial_x T\, N \, \partial_x T \\ {}-
\int  \partial_t^{2k-4} \partial_x T\, \partial_t N \, \partial_x T-\int  \partial_t^{2k-4} \partial_x T\, N \, \partial_t \partial_x T.\end{multline}
We claim that 
\begin{equation}\label{imsantub}
\frac d{dt} 
\int  \partial_t^{2k-4} \partial_x T\, N \, \partial_x T\in {\mathcal P}_{2k,k_0}, \quad
\int  \partial_t^{2k-4} \partial_x T\, \partial_t N \, \partial_x T \in {\mathcal L }_{2k,k_0}
\end{equation} for a suitable $k_0$.
For simplicity we sketch the proof of the second fact in \eqref{imsantub} (the proof of the first one is similar
once we combine the argument below and the proof of \eqref{tarsi}).
If we replace $T$ by its expression (see \eqref{defT})
then we are reduced to show
$$\int  \partial_t^{2k-4} \partial_x^3 N\, \partial_t N \, \partial_x^3N, 
\int  \partial_t^{2k-4} \partial_x^3 N\, \partial_t N \, \partial_x (|\partial_x u|^2),$$$$
\int  \partial_t^{2k-4} \partial_x^3 N\, \partial_t N \, \partial_x (N^3),
\int  \partial_t^{2k-4} \partial_x (|\partial_x u|^2)\, \partial_t N \, \partial_x (|\partial_x u|^2),
$$$$\int  \partial_t^{2k-4} \partial_x (|\partial_x u|^2)\, \partial_t N \, \partial_x (N^3),
\int  \partial_t^{2k-4} \partial_x (N^3)\, \partial_t N \, \partial_x (N^3)\in {\mathcal L }_{2k,k_0}.$$
We shall prove for instance
\begin{equation}\label{anne}\int  \partial_t^{2k-4} \partial_x^3 N\, \partial_t N \, \partial_x^3N\in {\mathcal L }_{2k,k_0},\end{equation}
the other terms above can be treated in a similar way.
The proof of this fact is similar to the proof of \eqref{impoterm}.
In fact by expanding space and time derivatives, by using the equation and hence transforming each time derivatives in two space derivatives, one is reduced to a linear combination
of multilinear expressions of derivatives of $u, \bar u$ (the amount of space derivatives involved is $4k$). In the multilinear expressions we have either at least four factors with nontrivial derivatives,
or we have exactly three factors with nontrivial derivatives of order at least $2$. Then in any case we get terms belonging to 
\eqref{family1} and \eqref{family2}, and we conclude the proof of \eqref{anne}.

Summarizing from \eqref{threeter} and \eqref{imsantub} we get:
\begin{align}\label{threeternew}\int  \partial_t^{2k-3} \partial_x T\, N \, \partial_x T\equiv -\int  \partial_t^{2k-4} \partial_x T\, N \, \partial_t \partial_x T,\end{align}
and by iteration of this argument (that allows to move one time derivatives from the first factor
to the third factor) we get
\begin{multline}\label{dalla}
\int  \partial_t^{2k-3} \partial_x T\, N \, \partial_x T \equiv (-1)^{k-2} \int  \partial_t^{k-2} \partial_x T N \partial_t^{k-1} \partial_x T
\\ = \frac{(-1)^{k-2}}2 \int \partial_t ((\partial_t^{k-2}(\partial_x T))^2) N\\
 =\frac{(-1)^{k-2}}2 \frac d{dt} \int (\partial_t^{k-2}(\partial_x T))^2 N \\
{}- \frac{(-1)^{k-2}}2 \int 
 (\partial_t^{k-2}(\partial_x T))^2
 \partial_t N
\equiv 0,
 \end{multline}
and hence we get \eqref{guidalb}.
Notice that at the last step in \eqref{dalla} we used
$$\frac d{dt} \int (\partial_t^{k-2}(\partial_x T))^2 N  \in {\mathcal P}_{2k,k_0}
\quad \int (\partial_t^{k-2}(\partial_x T))^2 \partial_t N \in {\mathcal L}_{2k,k_0},
$$
for a suitable $k_0\in\N$, whose proof follows by the same argument
used to prove \eqref{tarsi} and \eqref{impoterm}, once we replace $T$ by its explicit expression
(see \eqref{defT}) as we did above to prove \eqref{imsantub}.\\ 
\\
Next we go back to \eqref{complale} and we notice that  we can expand the 
first term on the r.h.s. as follows:
\begin{align}\label{Guidalb}\int  \partial_t^{2k-3} \partial_x T \partial_x N T
&\equiv -\int  \partial_t^{2k-3} \partial_x T \partial_x N \partial_x^2 N
= -\frac 12 \int  \partial_t^{2k-3} \partial_x T \partial_x (\partial_x N)^2
\\\nonumber&= \frac 12 \int  \partial_t^{2k-3} \partial_x^2 T (\partial_x N)^2
= -\frac 12 \int  \partial_t^{2k-2} \partial_x J (\partial_x N)^2
\\\nonumber &=\frac 12  \int  \partial_t^{2k-1} N (\partial_x N)^2=\frac 12 N_1
\end{align}
where we have used \eqref{contin} and we have replaced at the first step $T$ by $-\partial_x^2 N$
(we have neglected the other contributions coming from the expression of $T$ since, arguing as above, we can be shown that they are equivalent to zero since they produce terms belonging either to \eqref{family2} or to \eqref{family3}).
Summarizing, by combining \eqref{sinceev}, \eqref{complale}, \eqref{guidalb} and \eqref{Guidalb} we get 
$$
J_0+N_1\equiv \int \partial_t^{2k-1} N\, N\, \partial_x^2 N-\frac 12 N_1.
$$
We claim that 
\begin{equation}\label{Kogen}\int \partial_t^{2k-1} N N \partial_x^2 N\equiv -\frac 12 N_0
\end{equation}
and it will conclude the proof of the first equivalence in \eqref{mbmb}.
Indeed we have
\begin{equation}\label{matha}\partial_t^{2k-1} (N^2)=2 N \, \partial_t^{2k-1} N
+\sum_{j=1}^{2k-2} c_j \partial_t^j N\, \partial_t^{2k-1-j} N
\end{equation}
where $c_j$ are suitable real numbers.
Notice that 
\begin{equation}\label{mathann}
\int  \partial_t^j N \partial^{2k-1-j}_t  N
\partial_x^2 N
\in {\mathcal L}_{2k, k_0}, \quad \forall j\in\{1,\dots , 2k-2\}.
\end{equation}
for a suitable $k_0$, whose proof is similar to  \eqref{impoterm}, we skip the details.
By combining \eqref{mathann} and \eqref{matha} we get
$$\int \partial_t^{2k-1} (N^2)\partial_x^2 N\equiv \int 2 N \, \partial_t^{2k-1} N \partial_x^2 N$$ and hence \eqref{Kogen} follows provided that
\begin{equation*}\int \partial_t^{2k-1} (N^2)\partial_x^2 N\equiv -N_0.\end{equation*}
In turn this last identity follows by iterating $(2k-1)$-times the
following computation:
\begin{align}\label{lastimpref}N_0&= \int \partial_t ( N^2 \partial_t^{2k-2} \partial_x^2 N)
-\int \partial_t (N^2) \partial_t^{2k-2} \partial_x^2 N
\\\nonumber
&=\frac d{dt} \int N^2 \partial_t^{2k-2} \partial_x^2 N
-\int \partial_t (N^2) \partial_t^{2k-2} \partial_x^2 N
\equiv -\int \partial_t (N^2) \partial_t^{2k-2} \partial_x^2 N,\end{align}
where we have used $\frac d{dt} \int N^2 \partial_t^{2k-2} \partial_x^2 N\in 
{\mathcal P}_{2k, k_0}$ for a suitable $k_0$ (the proof is similar to the proof of \eqref{tarsi}).
\\

Concerning the second identity in \eqref{mbmb}, first notice that by an iterated application of \eqref{contin}  one can deduce 
\begin{equation}\label{diffbp}
\int \partial_t^{2k-1} N\, J^2 \equiv (-1)^k
\int \partial_x^{4k-3} J\, J^2.\end{equation} 
Indeed, by using \eqref{contin} we get
\begin{equation}\label{kappel}
\int \partial_t^{2k-1} N J^2=-\int \partial_t^{2k-2} \partial_x J J^2
=\int \partial_t^{2k-3} \partial_x^2 T J^2\equiv -\int \partial_t^{2k-3} \partial_x^4 N J^2\,,
\end{equation}
where the last step follows once we replace $T$ by its expression (see \eqref{defT}) and we keep
only the contribution given by $-\partial_x^2 N$. 
In fact, we claim that the other contributions coming from the remaining two terms in $T$ 
are equivalent to zero, namely: 
\begin{equation}\label{model}\int \partial_t^{2k-3} \partial_x^2 (|\partial_x u|^2) J^2\in {\mathcal L}_{2k, k_0}, \quad 
\int \partial_t^{2k-3} \partial_x^2 (N^3) J^2\in {\mathcal L}_{2k, k_0}\end{equation}
for a suitable $k_0$,
whose proof is similar to the proof of \eqref{impoterm}.
Going back to \eqref{kappel}, by an iteration of the same argument
(that allows to change two time derivatives with a fourth order space derivative), we get
$$\int \partial_t^{2k-1} N J^2
\equiv (-1)^{k+1}
\int \partial_t \partial_x^{4k-4} N J^2$$
and hence we conclude
\eqref{diffbp} by using \eqref{contin}. By using \eqref{diffbp} and integration by parts we get
\begin{equation}\label{quasiJ}J_0=\int \partial_t^{2k-1} N\, J^2\equiv (-1)^k \int \partial^{4k-3}_x J \,J^2=
2(-1)^{k+1}  \int \partial_x^{4k-4} J\,  J\, \partial_x J.\end{equation}
Again by integration by parts we get
$$J_0=2(-1)^{k}  \int \partial_x^{4k-5} J\,  \partial_x J\, \partial_x J
+2(-1)^{k}  \int \partial_x^{4k-5} J\,  J\, \partial_x^2 J\equiv 2(-1)^{k}  \int \partial_x^{4k-5} J\,  J\, \partial_x^2 J,$$
where we have used the property $\int \partial_x^{4k-5} J\,  \partial_x J\, \partial_x J
\in {\mathcal L}_{2k, k_0}$ for a suitable $k_0$, whose proof
follows again by looking at the proof of \eqref{impoterm}.
Hence by iteration of the previous argument we get 
 \begin{align*} J_0\equiv 
 2(-1)^{k}  \int \partial_x J J\, \partial_x^{4k-4} J& \equiv 
 (-1)^{k+1}  \int J^2\, \partial_x^{4k-3} J\equiv -J_0,\end{align*} 
where at the last step we have used \eqref{diffbp}.
Summarizing we get $J_0\equiv -J_0$ which implies $J_0\equiv 0$.\\

Next, we prove the third identity in \eqref{mbmb}. By using Leibniz rule 
with respect to the time variable and integration by parts we get
\begin{align}\label{tizio}
N_0&=\frac d{dt} \int N^2\, \partial_t^{2k-2} \partial_x^2 N-\int \partial_t (N^2)\, \partial_t^{2k-2} \partial_x^2 N\\\nonumber &\equiv -\int \partial_t (N^2)\, \partial_t^{2k-2} \partial_x^2 N=-2\int \partial_t N\, N\,  \partial_t^{2k-2} \partial_x^2 N
\\\nonumber
&=2 \int \partial_t \partial_x N\, N\,  \partial_t^{2k-2} \partial_x N
+2\int \partial_t N\, \partial_x N \, \partial_t^{2k-2} \partial_x N
\end{align}
where we have used $\frac d{dt} \int N^2\, \partial_t^{2k-2} \partial_x^2 N\in 
{\mathcal P}_{2k, k_0}$ for a suitable $k_0$ (see the proof of \eqref{tarsi}).
We claim that 
\begin{equation}\label{aquilone}
\int \partial_t \partial_x N\, N\,  \partial_t^{2k-2} \partial_x N\equiv 0.
\end{equation}
In fact notice that 
\begin{multline}\label{partinico}\int \partial_t \partial_x N\, N\,  \partial_t^{2k-2} \partial_x N=
\frac d{dt} \int \partial_t \partial_x N\, N\,  \partial_t^{2k-3} \partial_x N
\\{}- \int \partial_t \partial_x N\, \partial_t N\,  \partial_t^{2k-3} \partial_x N
-\int \partial_t^2 \partial_x N\, N\,  \partial_t^{2k-3} \partial_x N
\end{multline}
and since 
$$\int \partial_t \partial_x N\, \partial_t N\,  \partial_t^{2k-3} \partial_x N\in {\mathcal L}_{2k,k_0},
\quad \frac d{dt} \int \partial_t \partial_x N\, N\,  \partial_t^{2k-3} \partial_x N\in 
{\mathcal P}_{2k, k_0}$$
(whose proof is similar to the proof of \eqref{impoterm} and \eqref{tarsi}) we get 
\begin{equation}\label{martino}\int \partial_t \partial_x N\, N\,  \partial_t^{2k-2} \partial_x N
\equiv -\int \partial_t^2 \partial_x N\, N\,  \partial_t^{2k-3} \partial_x N.\end{equation}
By iteration of the proof to get \eqref{martino}
(that allows to move one time derivative from the third factor to the first factor)
we get 
$$\int \partial_t \partial_x N\, N\,  \partial_t^{2k-2} \partial_x N
\equiv (-1)^{k-1}\int \partial_t^{k} \partial_x N\, N\,  \partial_t^{k-1} \partial_x N
$$
and we conclude \eqref{aquilone} since
$$\int \partial_t^{k} \partial_x N\, N\,  \partial_t^{k-1} \partial_x N=\frac 12 
\int \partial_t (\partial_t^{k-1} \partial_x N)^2 N$$$$=\frac 12 \frac {d}{dt}
\int (\partial_t^{k-1} \partial_x N)^2 N -\frac 12 \int (\partial_t^{k-1} \partial_x N)^2 
\partial_t N\equiv 0$$
where at the last step we have used
$$\frac {d}{dt}
\int (\partial_t^{k-1} \partial_x N)^2 N\in {\mathcal P}_{2k, k_0}, \quad
\int (\partial_t^{k-1} \partial_x N)^2 
\partial_t N\in {\mathcal L}_{2k, k_0}$$ 
for some $k_0$, whose proof is similar to the proof of
\eqref{tarsi} and
\eqref{impoterm}.
Consequently from \eqref{tizio} and \eqref{aquilone} we get
\begin{equation}\label{mm}N_0\equiv 
2\int \partial_t N\, \partial_x N \, \partial_t^{2k-2} \partial_x N.
\end{equation}
We conclude provided that 
\begin{equation}\label{mitlef}2\int \partial_t N\, \partial_x N \, \partial_t^{2k-2} \partial_x N\equiv N_1.
\end{equation}
In order to prove \eqref{mitlef},  notice 
\begin{equation}\label{foa}\partial_t^{2k-2} (\partial_x N )^2=2\partial_x N \, \partial_t^{2k-2} \partial_x N
+\sum_{j=1}^{2k-3} c_j \partial_t^j \partial_x N \partial_t^{2k-2-j} \partial_x N\end{equation}
where $c_j$ are suitable real numbers.
We have 
\begin{equation}\label{claimlast}
\int \partial_t N \partial_t^j \partial_x N \partial^{2k-2-j}_t \partial_x N
\in {\mathcal L}_{2k, k_0}, \quad \forall j\in\{1,\dots , 2k-3\}\end{equation}
for a suitable $k_0$, whose proof is similar to the proof of \eqref{impoterm}.
Hence necessarily by \eqref{foa} we get the desired conclusion
$$2\int \partial_t N\, \partial_x N \, \partial_t^{2k-2} \partial_x N\equiv \int \partial_t N\,\partial_t^{2k-2} (\partial_x N)^2\equiv N_1.$$
Notice that the last equivalence $\equiv$ is obtained by iteration of the following computation,
that allows to move one time derivative from the second factor to the first factor in the intermediate term of the previous equivalence:
\begin{align*}\int \partial_t N\,\partial_t^{2k-2} (\partial_x N)^2&= \frac d{dt} \int \partial_t N\,\partial_t^{2k-3} (\partial_x N)^2 - \int \partial_t^2 N\,\partial_t^{2k-3} (\partial_x N)^2
\\\nonumber &\equiv - \int \partial_t^2 N\,\partial_t^{2k-3} (\partial_x N)^2
\end{align*}
where we have used 
$\frac d{dt} \int \partial_t N\,\partial_t^{2k-3} (\partial_x N)^2\in {\mathcal P}_{2k, k_0}$
for some $k_0$, whose proof is similar to the proof of \eqref{tarsi}.
This completes the proof of Lemma~\ref{fin}.

\end{proof}

The proof of Theorem~\ref{determinist} in the case $k\geq 2$, $M=\infty$ is now complete.
\subsection{Proof of Theorem~\ref{determinist} for $M<\infty$}
Let us finally consider the case $M<\infty$.  First notice that if $u(t,x)$ solves \eqref{NLS_N} then
$u_M(t,x)=\pi_M u(t,x)$ solves
\begin{equation}\label{u_M}
\partial_t u_M=i\partial_x^2  u_M-i|u_M|^4u_M+i(1-\pi_M)\big( |u_M|^4 u_M\big)\,,
\end{equation}
namely $u_M(t,x)$ is an exact solution to NLS up to the extra term 
$i(1-\pi_M)\big( |u_M|^4 u_M\big)$.
It is worth mentioning that the energy $E_{2k}(u)$ associated with
the infinite dimensional equation NLS, has the following structure
\begin{equation}\label{energyP}E_{2k} (u)=\int \partial_x^{2k} u 
\partial_x^{2k} \bar u+ \sum_{l=0}^{l_k}{c_l} \int p_l(u)
\end{equation}
where $c_l\in \C$ and 
$p_l(u)\in {\mathcal P}_{2k}$ are suitable densities, where 
$$
{\mathcal P}_{2k}=
\Big\{\prod_{i=1}^N \partial_{x}^{\alpha_i} v_i \,|\, v_i\in \{u, \bar u\},\, \sum_{i=1}^N \alpha_i\leq  
4k-2,\, \alpha_i\leq 2k-1,\, N\geq 3\Big\}.
$$
It is worth mentioning that the functionals defined in ${\mathcal P}_{2k}$ belong to the family
\eqref{family3}, and hence in particular we have
${\mathcal P}_{2k}\subset {\mathcal L}_{2k, N}$.

Notice that if we have 
$$
p_l(u)=\prod_{i=1}^N \partial_{x}^{\alpha_i} v_i\in {\mathcal P}_{2k}
$$
then along any time dependent function $u=u(t,x)$ we compute
\begin{equation}\label{stome}
\frac d{dt} \int p_l(u)=\sum_{j=1}^N \int \big(\prod_{i=1,\dots,N, i\neq j} \partial_{x}^{\alpha_i} v_i
\big)\partial_{x}^{\alpha_j} \partial_t v_j.
\end{equation}
If moreover $u(t,x)=u_M(t,x)$ solves \eqref{u_M} then we are allowed to replace 
$\partial_t v_j$ in \eqref{stome} either by $i\partial_x^2  u_M-i|u_M|^4u_M
+i(1-\pi_M)\big( |u_M|^4 u_M\big)$ in the case $v_j=u_M$ 
or by $ -i\partial_x^2  \bar u_M+i|u_M|^4\bar u_M-i(1-\pi_M)\big( |u_M|^4 \bar u_M\big)$)
in the case  $v_j=\bar u_M$.  Motivated by this fact we introduce for every 
$$
p_l(u)=\prod_{i=1}^N \partial_{x}^{\alpha_i} v_i\in {\mathcal P}_{2k}
$$
the new functional 
$\tilde p_l(u)$ 
which is the expression obtained at the r.h.s. of \eqref{stome} when replacing 
$ \partial_t v_j$ by $i\partial_x^2  u-i|u|^4u$ in the case  $v_j=u$  or by
$-i\partial_x^2  \bar u+i|u|^4\bar u$  in case $v_j=\bar u$. Namely
$$
\tilde p_l(u)=\sum_{j=1}^N \int (\prod_{i=1,\dots ,N, i\neq j} \partial_{x}^{\alpha_i} v_i
)\partial_{x}^{\alpha_j} w, 
$$
where $w=i\partial_x^2  u-i|u|^4u$ when  $v_j=u$ and $w=-i\partial_x^2  \bar u+i|u|^4\bar u$ when $v_j=\bar u$.
\\

We also introduce $p_{l,M}^*(u)$  defined by
the expression obtained at the r.h.s. of \eqref{stome} when replacing 
$ \partial_t v_j$ by $i(1-\pi_M)\big( |u|^4 u\big) $ in the case $v_j=u$ and by  $-i(1-\pi_M)\big( |u|^4 \bar u\big)$ in the case $v_j=\bar u$.
Namely
$$
p^*_{l,M}(u)=\sum_{j=1}^N \int (\prod_{i=1,\dots ,N, i\neq j} \partial_{x}^{\alpha_i} v_i
)\partial_{x}^{\alpha_j}  w, 
$$
where $w=i(1-\pi_M)\big( |u|^4 u\big)$ in the case $v_j=u$ and $w=-i(1-\pi_M)\big( |u|^4 \bar u\big)$ in the case $v_j=\bar u$.
\\

As a consequence of the previous discussion, we deduce that
\begin{multline*}
\frac d{dt} E_{2k} (u_M)= \int \partial_x^{2k} \partial_t u_M 
\partial_x^{2k} \bar u_M + \int \partial_x^{2k}  u_M 
\partial_x^{2k} \partial_t \bar u_M\\
{}+\sum_{l=1}^{l_{k}}c_l \int \tilde p_l(u_M) + 
\sum_{l=1}^{l_{k}} c_l \int p_{l,M}^*(u_M).
\end{multline*}
We can replace   $\partial_t u_M $ by the r.h.s. in \eqref{u_M} and we obtain
\begin{multline}\label{cortona3}
\frac d{dt} E_{2k} (u_M)
= \int \partial_x^{2k} (i\partial_x^2  u_M-i|u_M|^4u_M)
\partial_x^{2k} \bar u_M 
\\{}+ \int \partial_x^{2k}  u_M 
\partial_x^{2k} (-i\partial_x^2  \bar u_M+i|u_M|^4 \bar u_M)
\\{}+\sum_{l=1}^{l_{k}}c_l \int \tilde p_l(u_M) 
+ \int \partial_x^{2k}  (i(1-\pi_M)\big( |u_M|^4 u_M\big))
\partial_x^{2k} \bar u_M 
\\{}+ \int \partial_x^{2k}  u_M 
\partial_x^{2k} (i\partial_x^2  (-i(1-\pi_M)\big( |u_M|^4 \bar u_M\big)) 
+ \sum_{l=1}^{l_{k}} c_l \int p_{l,M}^*(u_M).
\end{multline}
In the previous section, we proved that of $u$ is a smooth solution of \eqref{NLS} then it satisfies 
\begin{equation}\label{cortona}
\frac d{dt}E_{2k} (u)=F^{(\infty)}_{2k}(u),
\end{equation}
with $F^{(\infty)}_{2k}(u)$ satisfying the needed bounds. 
Using \eqref{stome}, \eqref{cortona} and taking the trace at $t=0$, we  obtain that every $C^\infty(\T)$ function $v_{0}$ (where the notation emphasizes that $v_{0}$ is not a solution to any equation)  satisfies the relation 
\begin{align}\label{cortona2}
\int \partial_x^{2k} (i\partial_x^2 v_{0}-i|v_{0}|^4v_{0}) \partial_x^{2k} \bar v_{0}
&+\int  \partial_x^{2k}v_{0} \partial_x^{2k}  (-i \partial_x^2 \bar{v}_{0}+i|v_{0}|^4\bar{v}_{0}) \\\nonumber &+\sum_{l=1}^{l_k} c_{l}\tilde{p}_l(v_{0})=F^{(\infty)}_{2k}(v_{0})\,.
\end{align}
Applying \eqref{cortona2} to $v_{0}=u_M$, we obtain that the sum of the first three terms at the r.h.s. of \eqref{cortona3}
is equal to $F_{2k}^\infty(u_M)$. On the other hand the fourth and the fifth terms are zero by orthogonality. More precisely,  $u_M$ is localized on the frequencies
$\{-M,\dots ,0, \dots, M\}$ and on the remaining factor we have the projection on the orthogonal Fourier modes.
\\

The proof will be completed, provided we show that
$$|p_{l,M}^*(u_M)|\lesssim \big(1+\|u\|_{H^{2k-1}}^{m_0}\big)$$ 
for some $m_0$. For that purpose, it is sufficient to estimate expressions of the following type:
$$\int \prod_{i=1}^N \partial^{\alpha_i}_x v_i dx, \quad v_i\in \{u_M, \bar u_M, 
(1-\pi_M)\big( |u_M|^4 u_M\big), (1-\pi_M)\big( |u_M|^4 \bar u_M\big)\}$$
where $\sum_{i=1}^N \alpha_i\leq 4k-2, \alpha_i\leq 2k-1$. 
Combining  the H\"older inequality and the Sobolev embedding $H^1\subset L^\infty$
it is easy to show that 
$$\big|
\int \prod_{i=1}^N \partial^{\alpha_i}_x v_i dx
\big|
\lesssim \prod_{i=1}^N \|v_i\|_{H^{2k-1}}
$$
and of course we conclude the desired estimate
since 
$$
\|(1-\pi_M)\big( |u_M|^4 \bar u_M\big)\|_{H^{2k-1}} \leq \||u_M|^4 \bar u_M\|_{H^{2k-1}}\lesssim \|u_M\|_{H^{2k-1}}^4\, ,
$$
where we have used at the last step the fact that $H^{2k-1}$ is an algebra for $k\geq 1$.
\section{Proof of Theorem~\ref{th3} }
As in the proof of Theorem~\ref{th2}, we consider the following truncated version of \eqref{NLS_focus}
\begin{equation}\label{NLS_N_focus}
(i\partial_t+\partial_x^2)u=-\pi_M\big(|\pi_M u|^4 \pi_M u\big),\,\, u(0,x)=u_0(x), \quad t\in \R,\,\, x\in\T. 
\end{equation}
Thanks to the $L^2$ conservation law, for $M<\infty$, we can define the global flow of \eqref{NLS_N_focus} and denote it by $\Phi_M(t)$.
However for $u_0\in B_R$ (see \eqref{biar}), we have bounds on the solutions, uniform in $M$ only on the interval $[-T,T]$, where $T>0$ is depending on $R$.
For $M=\infty$, we can define the solution of \eqref{NLS_N_focus} for data in $B_R$ only locally in the time interval $[-T,T]$, $T=T(R)$.
One can observe that the proof of Theorem~\ref{determinist} yields the following statement in the context of \eqref{NLS_N_focus}.
\begin{theoreme}
Let $k\geq 1$ be an integer. 
There is an  integer $m_0=m_0(k)>0$ and a positive constant $C$ such that the following holds true. There exists a functional $E_{2k}(u)$ such that
$$
E_{2k}(u)=\|u\|_{H^{2k}}^2 + R_{2k}(u),\quad E_{2k}(0)=0,
$$ 
where $R_{2k}$ satisfies \eqref{NEW}  and moreover for every  $M\in \N\cup \{\infty\}$, the solution of  \eqref{NLS_N_focus}  with data $u_0\in B_R$ satisfies 
\begin{equation*}
\frac d{dt} E_{2k}(\pi_M u(t))= F^{(M)}_{2k}(\pi_M u(t)),\quad t\in [-T,T],\, T=T(R)
\end{equation*}
where the functional $F^{(M)}_{2k}(u)$ satisfies \eqref{NEW_pak_2k} and \eqref{NEW_pak_2}.
\end{theoreme}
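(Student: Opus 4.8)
The plan is to observe that the proof of Theorem~\ref{determinist} carries over to the focusing equation \eqref{NLS_N_focus} essentially without change, the only genuinely new input being a local-in-time, uniform-in-$M$ control of the solution in the topology $H^{2k-\frac12-\epsilon}$ of the support of $\mu_{2k}$, which is precisely what confines the identity to a finite interval $[-T,T]$.

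First I would revisit all of Section~\ref{sect3} with $|u|^4u$ replaced by $-|u|^4u$. Every step there is an algebraic manipulation — integration by parts, the Leibniz rule, the density conservation laws of Lemma~\ref{cle}, which hold verbatim for \eqref{NLS_N_focus} — followed by a multilinear estimate that counts derivatives but is insensitive to signs; this was already flagged explicitly right after \eqref{forum0}. Hence the same construction yields a functional $E_{2k}(u)=\|u\|_{H^{2k}}^2+R_{2k}(u)$ of the structural form \eqref{energyP} (with the coefficients $c_l$ possibly differing from the defocusing case), with $R_{2k}$ obeying \eqref{NEW}, and with $F^{(M)}_{2k}$ obeying \eqref{NEW_pak_2k} for $k\ge2$ and \eqref{NEW_pak_2} for $k=1$. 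In particular the analysis of the case $M<\infty$ in Section~\ref{sect3} applies word for word: $\pi_M u$ solves the focusing analogue of \eqref{u_M}, the decomposition of $\frac d{dt}E_{2k}(\pi_M u)$ into $F^{(\infty)}_{2k}(\pi_M u)$, terms that vanish by Fourier orthogonality, and $\sum_l c_l\int p^*_{l,M}(\pi_M u)$ is unchanged, and one bounds $|p^*_{l,M}(\pi_M u)|\lesssim 1+\|\pi_M u\|_{H^{2k-1}}^{m_0}$ exactly as before, still using that $H^{2k-1}$ is an algebra for $k\ge1$.

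Next I would, exactly as at the beginning of Section~\ref{sect3}, reduce to $u_0\in C^\infty(\T)$ by density and the continuity of the (local) flow of \eqref{NLS_N_focus} on $H^s$, $s\ge1$. For smooth $u_0$ with $\|u_0\|_{H^{2k-\frac12-\epsilon}}<R$, the standard local well-posedness theory for the 1d quintic NLS — the Strichartz/$X^{s,b}$ contraction argument of \cite{Bo1} — provides $T=T(R)>0$ and $C=C(R)>0$ such that the solution of \eqref{NLS_N_focus} exists, is smooth on $[-T,T]$, and satisfies
\[
\sup_{|t|\le T}\|\pi_M u(t)\|_{H^{2k-\frac12-\epsilon}}\le C(R),\qquad \forall\, M\in\N\cup\{\infty\}.
\]
The uniformity in $M$ is immediate because $\pi_M$ is a projector of norm one on every $H^s$, so the contraction estimates for the truncated nonlinearity hold with constants independent of $M$; for $M<\infty$ the flow is in fact global by the $L^2$ conservation law, but only $[-T,T]$ is needed, and the blow-up results of \cite{OgTs} are exactly what prevents taking $T=+\infty$ when $M=\infty$. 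On $[-T,T]$ the map $t\mapsto E_{2k}(\pi_M u(t))$ is then a genuine $C^1$ function whose derivative equals $F^{(M)}_{2k}(\pi_M u(t))$ by the previous step, and since $\epsilon<\tfrac12$ we have the embedding $H^{2k-\frac12-\epsilon}(\T)\hookrightarrow H^{2k-1}(\T)$ — together with $H^{\frac32-\epsilon}(\T)\hookrightarrow W^{1,4}(\T)$ for $\epsilon$ small — so the displayed bound feeds \eqref{NEW_pak_2k}--\eqref{NEW_pak_2}, and also yields \eqref{NEW_pak_again}, uniformly in $M$ on $[-T,T]$, which is what is used downstream in the proof of Theorem~\ref{th3}.

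The hard part is really the uniform-in-$M$ local existence time and a priori bound of the third step: obtaining it in the rough space $H^{2k-\frac12-\epsilon}$, rather than in $H^{2k-1}$ which the energy estimate would otherwise demand, is exactly where the $L^4$ Strichartz estimate enters. This, however, is classical material and is in any case already part of the input to Theorem~\ref{th3}; the presence of the Dirichlet projectors costs nothing since they are uniformly bounded on all Sobolev spaces.
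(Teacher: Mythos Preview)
Your proposal is correct and follows the same approach as the paper, which gives no proof beyond the single sentence ``One can observe that the proof of Theorem~\ref{determinist} yields the following statement in the context of \eqref{NLS_N_focus}.'' Your sketch simply unpacks this: the construction of $E_{2k}$ and the bounds on $R_{2k}$, $F^{(M)}_{2k}$ are purely algebraic/multilinear and insensitive to the sign of the nonlinearity (with the harmless caveat that the $N^3$ term in $T$ flips sign, which you could mention, but that term is always discarded as lower order anyway). One small remark: the uniform-in-$M$ local well-posedness and a priori bound you discuss in your third paragraph are not really part of this theorem --- the bounds \eqref{NEW_pak_2k}, \eqref{NEW_pak_2} are on $F^{(M)}_{2k}$ as a functional of $u$, not on the solution itself --- and the paper packages that input separately as Proposition~\ref{cauchy-sigma_focus}; but including it here does no harm.
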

Now, the proof of Theorem~\ref{th3} can be done exactly as the proof of Theorem~\ref{th2} once we replace Proposition~\ref{cauchy-sigma} and 
Proposition~\ref{sigma} with the following local in time analogues in the context of \eqref{NLS_N_focus}. 
\begin{proposition}\label{cauchy-sigma_focus}
For every $R>0$ there is $C>0$ depending on $R$, such that for every $M\in \N\cup \{\infty\}$, if the initial data in \eqref{NLS_N_focus} satisfies
$
\|u_0\|_{H^{2k-\frac 12 -\epsilon}}<R,
$
then the corresponding solution satisfies
$$
\|u(t)\|_{H^{2k-\frac 12 -\epsilon}}<C,\quad t\in [-T,T],\, T=T(R).
$$
\end{proposition}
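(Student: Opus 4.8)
My plan is to obtain Proposition~\ref{cauchy-sigma_focus} not from the modified energy of Theorem~\ref{determinist} but, more simply, from the bare local Cauchy theory for the quintic Schr\"odinger equation on $\T$ at the single regularity $s_0:=2k-\frac12-\epsilon$, which is positive since $k\geq 1$ and $\epsilon$ is small. The sign in front of $|u|^4u$ is irrelevant for the local theory, so \eqref{NLS_N_focus} is treated exactly as \eqref{NLS_N}; by \cite{Bo1} the latter is locally well posed in $H^{s}(\T)$ for every $s>0$, via a contraction in Bourgain's $X^{s,b}$ spaces whose multilinear input is the periodic $L^6$-Strichartz inequality (the mild $\epsilon$-loss of derivatives there being harmless because $s_0>0$). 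The one point that deserves a word is uniformity in $M$: since the Fourier projector $\pi_M$ has operator norm $\leq 1$ on every $X^{s,b}$ and on every Lebesgue space, uniformly in $M\in\N\cup\{\infty\}$ (with $\pi_\infty=\mathrm{Id}$), the truncated nonlinearity $-\pi_M(|\pi_M u|^4\pi_M u)$ obeys the same multilinear estimates as $-|u|^4u$ with $M$-independent constants, so the contraction argument runs verbatim for all $M$.

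Carrying this out, one gets some $b>\frac12$ and, for every $R>0$, a time $T=T(R)>0$ (nonincreasing in $R$, with a lower bound of the form $T(R)\gtrsim R^{-\gamma}$, $\gamma=\gamma(k)>0$) and a constant $C=C(R)$ such that for every $M\in\N\cup\{\infty\}$ and every $u_0$ with $\|u_0\|_{H^{s_0}}<R$ the solution of \eqref{NLS_N_focus} lies in $X^{s_0,b}([-T,T]\times\T)$ with norm at most $C(R)$; since $b>\frac12$ gives the embedding $X^{s_0,b}([-T,T]\times\T)\hookrightarrow C([-T,T];H^{s_0}(\T))$, this yields $\|u(t)\|_{H^{s_0}}<C$ for $|t|\leq T$ after harmlessly enlarging $C$. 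That is exactly the claim of Proposition~\ref{cauchy-sigma_focus}, with the same $T$ and $C$ for $M<\infty$ and for $M=\infty$; this is the local-in-time counterpart of \cite[Proposition~2.6]{Tz}.

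I do not expect a real obstacle here: the only things to watch are the $M$-uniformity just discussed and the fact that $T$ and $C$ must depend on $u_0$ only through $\|u_0\|_{H^{s_0}}$ — which holds precisely because the whole fixed-point argument is run at the single regularity $s_0$. This is also why, in contrast with the defocusing statement Proposition~\ref{cauchy-sigma}, which additionally invokes the $L^2$ and energy conservation laws (and the modified energy) to push the bound to all times, here one can only assert, and only needs, control on the short interval $[-T,T]$. The focusing modified energy identity stated just above is not used for this a priori bound; it enters afterwards, exactly as estimate \eqref{zvezda} does in the proof of Theorem~\ref{th2}, to control over $[-T,T]$ the evolution of the densities $\exp(-E_{2k}(\pi_M\,\cdot\,))$ on the ball $B_R$.
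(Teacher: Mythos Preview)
Your proposal is correct and matches the paper's intent: the paper gives no separate proof of Proposition~\ref{cauchy-sigma_focus}, presenting it simply as the ``local in time analogue'' of Proposition~\ref{cauchy-sigma} (itself referred to \cite[Proposition~2.6]{Tz}), which amounts precisely to the $M$-uniform local well-posedness argument you outline. Your remarks on the sign-independence of the local theory, the boundedness of $\pi_M$ on $X^{s,b}$, and the contrast with the defocusing globalisation are all to the point.
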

\begin{proposition}\label{sigma_focus}
For every $R>0$ there is $T>0$ such that the following holds true.
For every $A\subset H^{2k-\frac 12 -\epsilon}$ a compact set included in $B_R$, for every $\delta>0$ 
there exists $M_0\in \N$ such that for every $M\geq M_0$, every $u_0\in A$, the  local in time solution 
$$
(i\partial_t+\partial_x^2)u=-|u|^4 u,\,\, u(0,x)=u_0(x), \quad t\in [-T,T],\,\, x\in\T
$$
satisfies 
$
u(t)\in  \Phi_M(t) (A+B_\delta),
$
$
\forall t\in [-T, T]\, .
$
\end{proposition}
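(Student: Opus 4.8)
The plan is to adapt the argument of \cite[Proposition~2.10]{Tz} to the present local-in-time setting. The crucial structural fact is that for $M<\infty$ the truncated flow $\Phi_M(t)$ of \eqref{NLS_N_focus} is globally defined (by the $L^2$ conservation law) and invertible, with inverse $\Phi_M(-t)$, while the true focusing flow $\Phi(t)=\Phi_\infty(t)$ is, for data in $B_R$, defined at least on $[-T,T]$ with $T=T(R)$ as in Theorem~\ref{th3}. Invertibility of the truncated flows is exactly what upgrades a one-sided approximation of $\Phi$ by $\Phi_M$ into the two-sided inclusion $\Phi(t)(A)\subset\Phi_M(t)(A+B_\delta)$.

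First I would record the uniform-in-$M$ local Cauchy theory underlying Proposition~\ref{cauchy-sigma_focus}: there are $T=T(R)>0$ and $C=C(R)>0$ such that, for every $M\in\N\cup\{\infty\}$ and every datum of $H^{2k-\frac 12 -\epsilon}$-norm $<2R$, the corresponding solution of \eqref{NLS_N_focus} exists on $[-T,T]$ and stays in $B_C$, and moreover the maps $\Phi_M(\pm t)\colon B_C\to H^{2k-\frac 12 -\epsilon}$, $t\in[0,T]$, are Lipschitz with a constant $L=L(R)$ independent of $M$ and of $t$. This is the very Cauchy theory used to set up Theorem~\ref{th3}; the uniformity in $M$ is obtained as in the defocusing case, the quintic nonlinearities $\pi_M(|\pi_M u|^4\pi_M u)$ being bounded on $H^{2k-\frac 12 -\epsilon}$ uniformly in $M$ since $H^{2k-1}$ (hence $H^{2k-\frac 12-\epsilon}$) is an algebra for $k\ge1$. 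Next I would establish
$$
\lim_{M\to\infty}\ \sup_{u_0\in A}\ \sup_{t\in[-T,T]}\ \bigl\|\Phi_M(t)(u_0)-\Phi(t)(u_0)\bigr\|_{H^{2k-\frac 12 -\epsilon}}=0 .
$$
For a fixed smooth $u_0$ this is routine: the difference solves a linear Schr\"odinger equation with source $(1-\pi_M)(|u|^4u)$ plus differences of quintic terms, all of which tend to $0$; one then passes to general $u_0\in A$ and makes the convergence uniform over the compact set $A$ by covering $A$ with finitely many small balls and using the uniform Lipschitz bound of the previous step. Equivalently, this is precisely \cite[Proposition~2.10]{Tz}, valid here verbatim because on the short interval $[-T,T]$ the focusing sign is irrelevant.

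To conclude, given $\delta>0$ (which we may take $<R$) I would pick $M_0$ so large that for $M\ge M_0$ the supremum above is $<\delta/L$. Fix such $M$ (hence $M<\infty$), $u_0\in A$ and $t\in[-T,T]$, and set $v_0:=\Phi_M(-t)\bigl(\Phi(t)(u_0)\bigr)$, which is well defined since $\Phi_M$ is a global flow. Using $u_0=\Phi_M(-t)\bigl(\Phi_M(t)(u_0)\bigr)$, that $\Phi(t)(u_0)$ and $\Phi_M(t)(u_0)$ both lie in $B_C$, and the uniform Lipschitz bound, we get
$$
\|v_0-u_0\|_{H^{2k-\frac 12 -\epsilon}}=\bigl\|\Phi_M(-t)(\Phi(t)(u_0))-\Phi_M(-t)(\Phi_M(t)(u_0))\bigr\|_{H^{2k-\frac 12 -\epsilon}}\le L\,\bigl\|\Phi(t)(u_0)-\Phi_M(t)(u_0)\bigr\|_{H^{2k-\frac 12 -\epsilon}}<\delta ,
$$
so $v_0\in u_0+B_\delta\subset A+B_\delta$; and $\Phi_M(t)(v_0)=\Phi(t)(u_0)=u(t)$ by the group property of $\Phi_M$. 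This is exactly the asserted inclusion $u(t)\in\Phi_M(t)(A+B_\delta)$.

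The main obstacle is not this soft final step but the uniform-in-$M$ local well-posedness on $[-T,T]$ behind Proposition~\ref{cauchy-sigma_focus}: one needs an existence time, a priori bounds, and Lipschitz dependence on the data that are simultaneously valid for all truncations and for the limiting focusing equation. Once that is granted --- and it is obtained exactly as for \eqref{NLS} in \cite{Tz}, the focusing sign being immaterial on a short time interval --- the remainder is just the algebraic observation above that invertibility of the $\Phi_M$'s converts approximation into inclusion.
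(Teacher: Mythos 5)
Your argument is correct and is essentially the paper's own: the paper gives no details for Proposition~\ref{sigma_focus}, deferring (via Proposition~\ref{sigma}) to \cite[Proposition~2.10]{Tz}, whose proof is precisely the combination you reconstruct --- uniform-in-$M$ local well-posedness and Lipschitz dependence, uniform convergence of $\Phi_M$ to $\Phi$ on the compact set $A$, and the inversion $v_0=\Phi_M(-t)(\Phi(t)(u_0))$ using global invertibility of the truncated flows to turn approximation into the inclusion. The only point worth making explicit in a write-up is the small bootstrap ensuring the backward trajectory $s\mapsto\Phi_M(-s)(\Phi(t)(u_0))$ stays in the bounded set where the uniform Lipschitz estimate is valid, which is handled exactly as in \cite{Tz}.
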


\end{document}